\numberwithin{equation}{section}
\newtheorem{theorem}[equation]{Theorem}
\newtheorem*{theorem*}{Theorem}
\newtheorem{lemma}[equation]{Lemma}
\newtheorem{proposition}[equation]{Proposition}
\newtheorem{corollary}[equation]{Corollary}
\newtheorem*{corollary*}{Corollary}
\theoremstyle{remark}
\newtheorem{definition}[equation]{Definition}
\newtheorem{example}[equation]{Example}
\theoremstyle{remark}
\newtheorem{remark}[equation]{Remark}
\newcommand{\cA}{{\mathcal A}}
\newcommand{\cB}{{\mathcal B}}
\newcommand{\cC}{{\mathcal C}}
\newcommand{\cD}{{\mathcal D}}
\newcommand{\cE}{{\mathcal E}}
\newcommand{\cF}{{\mathcal F}}
\newcommand{\cG}{{\mathcal G}}
\newcommand{\cK}{{\mathcal K}}
\newcommand{\cO}{{\mathcal O}}
\newcommand{\cP}{{\mathcal P}}
\newcommand{\cU}{{\mathcal U}}
\newcommand{\bbC}{\mathbf{C}}
\newcommand{\bbL}{\mathbf{L}}
\newcommand{\bbP}{\mathbf{P}}
\newcommand{\bbR}{\mathbf{R}}
\newcommand{\bbQ}{\mathbf{Q}}
\newcommand{\bbZ}{\mathbf{Z}}
\DeclareMathOperator{\NChow}{NChow} 
\DeclareMathOperator{\Chow}{Chow} 
\DeclareMathOperator{\HH}{HH}
\DeclareMathOperator{\HP}{HP}
\DeclareMathOperator{\CH}{CH}
\DeclareMathOperator{\Fun}{Fun} 
\newcommand{\dgcat}{\mathsf{dgcat}}
\newcommand{\perf}{\mathsf{perf}}
\newcommand{\dg}{\mathsf{dg}}
\newcommand{\Hom}{\mathrm{Hom}}
\newcommand{\End}{\mathrm{End}}
\newcommand{\rep}{\mathsf{rep}}
\newcommand{\per}{\mathrm{per}}
\newcommand{\unit}{\mathbf{1}}
\newcommand{\todd}{\mathrm{Td}}
\newcommand{\sqtodd}{\sqrt{\todd}}
\newcommand{\chern}{\mathit{ch}}
\newcommand{\Hmo}{\mathsf{Hmo}}
\newcommand{\op}{\mathsf{op}}
\newcommand{\too}{\longrightarrow}
\newcommand{\ie}{\textsl{i.e.}\ }
\begin{document}

\title[Lefschetz and Hirzebruch-Riemann-Roch formulas]{Lefschetz and Hirzebruch-Riemann-Roch formulas \\via noncommutative motives}

\author{Denis-Charles Cisinski}
\address{Universit\'e Paul Sabatier\\
Institut de Math\'ematiques de Toulouse\\
118 route de Narbonne\\
F-31062 Toulouse Cedex 9}
\email{denis-charles.cisinski@math.univ-toulouse.fr}
\urladdr{http://www.math.univ-toulouse.fr/~dcisinsk/}

\author{Gon{\c c}alo~Tabuada}

\address{Gon{\c c}alo Tabuada, Department of Mathematics, MIT, Cambridge, MA 02139, USA}
\email{tabuada@math.mit.edu}
\urladdr{http://math.mit.edu/~tabuada/}

\subjclass[2000]{14A22, 14C20, 14F25, 14F40, 18D20, 19L10}
\date{\today}

\thanks{Gon{\c c}alo Tabuada was partially supported by the NEC Award-2742738.}


\abstract{V.~Lunts has recently established Lefschetz fixed point theorems for Fourier-Mukai
functors and dg algebras. In the same vein, D.~Shklyarov introduced the noncommutative analogue
of the Hirzebruch-Riemman-Roch theorem. In this short article, we see how these constructions
and computations formally stem from their motivic counterparts.}
}
\maketitle
\vskip-\baselineskip
\vskip-\baselineskip
\section{Introduction: traces, pairings, and Hochschild homology}
Let $k$ be a field. Lunts' results on the noncommutative Lefschetz formula
can be summarised as follows:
\begin{theorem}{(see~\cite[Thms 1.1, 1.2 and 1.4]{Lunts})}\label{thm:Lunts1}
Let $X$ be a smooth and proper $k$-scheme and $\cE$ a bounded
complex of coherent $\cO_{X\times X}$-modules. Then, the following equality holds
\begin{equation}\label{eq:equality1}
\sum_i (-1)^i \, \mathrm{dim}\,  \HH_i(\cE) = \sum_j (-1)^j \,
\mathrm{Tr}\, \HH_j(\Phi_\cE)\,,
\end{equation}
where $\Phi_\cE$ is the associated Fourier-Mukai functor and
$\HH(\cE)=\HH(X,\cE)$ the Hochschild homology of $X$ with coefficients in $\cE$.
Moreover, when $k=\bbC$ we have
\begin{equation}\label{eq:equality2}
\sum_i (-1)^i \, \mathrm{dim} \, \HH_i(\cE) =
\mathrm{Tr}\,  H^{\mathrm{even}}(\Phi_\cE) - \mathrm{Tr} \, H^{\mathrm{odd}}(\Phi_\cE)\,, 
\end{equation}
where $H^{\mathrm{even}}$ and $H^{\mathrm{odd}}$ are the even and odd parts of Betti cohomology
with rational coefficients and $H^\ast(\Phi_\cE):H^\ast(X)\to H^\ast(X)$ the correspondence associated to the class $\chern(\cE)\cdot\sqtodd_X \in H^\ast(X\times X)$. 

Let $A$ be a smooth and proper dg $k$-algebra and $M$ a perfect dg $A$-bimodule. Then, the following equality holds
\begin{equation}\label{eq:equality3}
\sum_i (-1)^i \, \mathrm{dim} \, \HH_i(A;M) =
\sum_j (-1)^j\, \mathrm{Tr}\, \HH_j(\Phi_M)\,.
\end{equation}
\end{theorem}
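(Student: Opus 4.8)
The plan is to deduce all three identities from one computation --- the categorical trace, in the symmetric monoidal category of noncommutative motives, of the endomorphism attached to a perfect bimodule --- the three cases differing only in which symmetric monoidal additive invariant one realizes with. First I would reduce \eqref{eq:equality1} to \eqref{eq:equality3}: a smooth and proper $k$-scheme $X$ gives rise to a smooth and proper dg category $\perfdg(X)$; by Morita theory $\perfdg(X\times X)$ is equivalent to $\perfdg(X)^\op\otimes^{\Left}\perfdg(X)$, so the bounded complex $\cE$ on $X\times X$ becomes a perfect $\perfdg(X)$-bimodule $M$; Hochschild homology of $X$ with coefficients in $\cE$ coincides with $\HH(\perfdg(X);M)$; and the Fourier--Mukai functor $\Phi_\cE$ corresponds to the endofunctor $-\otimes^{\Left}_{\perfdg(X)}M$, whence $\HH_j(\Phi_\cE)=\HH_j(\Phi_M)$. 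Thus \eqref{eq:equality1} is the case $A=\perfdg(X)$, $M=\cE$ of \eqref{eq:equality3}, on which I would concentrate.

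For \eqref{eq:equality3} I would proceed in three steps. (i) Recall that Hochschild homology is a symmetric monoidal additive invariant, hence descends to a symmetric monoidal functor $\HH\colon\NChow(k)\to\cD(k)$; that a smooth and proper dg algebra $A$ has dualizable image $\cU(A)$, with dual $\cU(A^\op)$; and that $\HH$ carries $\cU(A)$ to the perfect complex $\HH(A)$ compatibly with these dualities. A perfect $A$-bimodule $M$ defines a class $[M]\in\Hom_{\NChow(k)}(\cU(A),\cU(A))\cong\K_0(A^\op\otimes^{\Left}A)$, and the endomorphism $\HH([M])$ of $\HH(A)$ is exactly $\HH(\Phi_M)$. (ii) Unwind the evaluation and coevaluation of the duality $\cU(A^\op)\dashv\cU(A)$ in terms of bimodules: the coevaluation is the diagonal bimodule, the evaluation is ``$-\otimes^{\Left}_{A^\op\otimes A}A$'', and the composite defining the trace is therefore the class of $M\otimes^{\Left}_{A^\op\otimes A}A=\HH(A;M)$ in $\End_{\NChow(k)}(\unit)=\K_0(k)=\bbZ$; since $A$ is proper this complex has finite-dimensional total homology, so $\mathrm{tr}_{\NChow(k)}([M])=\sum_i(-1)^i\dim_k\HH_i(A;M)$, the left-hand side of \eqref{eq:equality3}. (iii) Since symmetric monoidal functors preserve traces, applying $\HH$ gives
\[
\sum_i(-1)^i\dim_k\HH_i(A;M)\;=\;\mathrm{tr}_{\cD(k)}\!\bigl(\HH(\Phi_M)\bigr)\;=\;\sum_j(-1)^j\,\mathrm{Tr}\,\HH_j(\Phi_M),
\]
the last equality because $k$ is a field, so the trace of an endomorphism of a perfect complex is the alternating sum of the ordinary traces on its homology. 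This proves \eqref{eq:equality3}, hence \eqref{eq:equality1}.

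For \eqref{eq:equality2} over $k=\bbC$ I would run the identical argument with periodic cyclic homology $\HP$ in place of $\HH$: it too is a symmetric monoidal additive invariant, factors through $\NChow(\bbC)$, and preserves the trace of $[\cE]$, which remains $\sum_i(-1)^i\dim\HH_i(\cE)$ since that number is already computed in $\K_0(\bbC)=\bbZ$. It then remains to rewrite $\mathrm{tr}_{\cD(\bbC)}(\HP(\Phi_\cE))$ classically, via the comparison between $\HP(\perfdg(X))$ and Betti cohomology. Here one uses Shklyarov's observation --- which the motivic formalism upgrades to a statement compatible with the monoidal and trace structures --- that the comparison identifying the Mukai pairing with Poincar\'e duality is the Hochschild--Kostant--Rosenberg map twisted by multiplication by $\sqtodd_X$; under it the operator induced by $\cE$ becomes the correspondence attached to $\chern(\cE)\cdot\sqtodd_X$, whose supertrace is $\mathrm{Tr}\,H^{\mathrm{even}}(\Phi_\cE)-\mathrm{Tr}\,H^{\mathrm{odd}}(\Phi_\cE)$.

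The one genuinely non-formal input, and the main obstacle, is the pair of structural facts invoked at the outset: that $\HH$ and $\HP$ really do upgrade to \emph{symmetric monoidal} functors on noncommutative motives taking smooth and proper dg algebras to dualizable objects compatibly with duals --- which requires care with the derived tensor product of dg categories and with strong- versus lax-monoidality --- and, for \eqref{eq:equality2}, identifying the $\sqtodd$-twist in the $\HP$-to-Betti comparison precisely enough to read off the classical correspondence. Everything else --- the descent from schemes to dg categories, the bimodule description of the trace, and the passage to the alternating sum on homology --- is formal manipulation in a rigid symmetric monoidal category.
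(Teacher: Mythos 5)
Your treatment of \eqref{eq:equality1} and \eqref{eq:equality3} matches the paper's: you prove \eqref{eq:equality3} by computing the categorical trace of $\Phi_{[M]}$ in $\Hmo_0(k)$ to be the class of $\HH(\cA;M)$ in $K_0(k)$ (this is Proposition~\ref{prop:absolutetracencmot} in the paper, quoted from Marcolli--Tabuada), then transport along the trace-preserving strong monoidal functor $\underline{\HH}$, and deduce \eqref{eq:equality1} by identifying $\perf(X)$-bimodules with kernels on $X\times X$ together with Keller's agreement of the various Hochschild homologies of schemes.

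For \eqref{eq:equality2}, however, you take a genuinely different route. You realize the motivic trace with $\HP$ and then pass to Betti cohomology via the HKR/Mukai comparison twisted by $\sqtodd_X$. That is essentially Lunts' original argument (relying on Macri--Stellari for the compatibility of the twisted HKR map with Fourier--Mukai functors), and the paper explicitly singles it out, in the remark following the proof, as the step it is engineered to avoid. The paper instead proves a ``categorical Grothendieck--Riemann--Roch theorem'' (Theorem~\ref{thm:categoricalGRR}): the assignment $\cE\mapsto\chern(\cE)\cdot\sqtodd_{X\times Y}$ defines a fully faithful strong symmetric monoidal functor $\Chow^\per_\bbQ(k)\hookrightarrow\Hmo_{0,\bbQ}(k)$. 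Once this is in place, the trace of the Fourier--Mukai endomorphism is computed already inside $\Chow^\per_\bbQ(k)$ to be $\sum_i(-1)^i\dim\HH_i(\cE)$ (via Corollary~\ref{cor:absolutetracencmotcomm} and full faithfulness), and one then applies the perioditization $H^\per$ of \emph{any} Weil cohomology, viewed as a trace-preserving strong monoidal functor out of $\Chow^\per_\bbQ(k)$. This buys two things that your route does not: it replaces the nontrivial Macri--Stellari functoriality of twisted HKR, which you rightly flag as the main obstacle, by the classical Grothendieck--Riemann--Roch theorem plus the observation that $\chern\cdot\sqtodd$ intertwines $K$-theoretic and cycle-theoretic correspondences; and it yields Theorem~\ref{thm:main2} for an arbitrary Weil cohomology, not only Betti cohomology over $\bbC$. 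Your route is viable but still requires the $\HP\!\leftrightarrow\!$ Betti comparison, with its monoidal and trace structure, as genuine external input; the paper's route dispenses with it.
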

Shklyarov's noncommutative analogue of the Hirzebruch-Riemman-Roch theorem
can be stated as follows.
\begin{theorem}{(see~\cite[Thm.~3]{Shklyarov})}\label{thm:HRR}
 Let $A$ be a proper dg $k$-algebra. Then, there is a canonical pairing
$$ \langle -,- \rangle: \HH(A)\otimes_k\HH(A^\op)\too k$$
such that for any two perfect dg $A$-modules $M$ and $N$
the following equality holds
\begin{equation}\label{eq:HRR}
\chi(M,N) = \langle \chi(N), \chi(DM)\rangle\,,
\end{equation}
where $DM$ is the formal dual of $M$, $\chi$ is the Euler character
(also known as Dennis trace map), and
$\chi(M,N)=\chi(\bbR\mathit{Hom}_A(M,N))$ is the Euler character of the derived Hom
of maps from $M$ to $N$.
\end{theorem}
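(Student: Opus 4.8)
The plan is to interpret both sides of~\eqref{eq:HRR} inside the category of noncommutative motives and to recognise the identity there as the image, under Hochschild homology, of a tautological identity in Grothendieck groups --- with properness of $A$ entering only through a single structural morphism. I will freely use the following standard facts. Algebraic $K$-theory and Hochschild homology are additive (indeed localising) invariants, hence factor through the universal additive invariant $\Uadd\colon\dgcat\to\Madd$ (one may equally use the localising variant); the target carries a symmetric monoidal structure for which $\Uadd$ is symmetric monoidal and $\Uadd(B\otimes B')\simeq\Uadd(B)\otimes\Uadd(B')$, and with respect to which $\HH$ descends to a symmetric monoidal functor $\Madd\to\cD(k)$, again denoted $\HH$, recovering Hochschild homology along $\Uadd$ (with the K\"unneth maps $\HH(B)\otimes_k\HH(B')\isoto\HH(B\otimes B')$ as coherence isomorphisms). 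Finally there are canonical identifications $\Hom_{\Madd}(\Uadd(k),\Uadd(B))\cong K_0(B)$ and, more generally, $\Hom_{\Madd}(\Uadd(B),\Uadd(B'))\cong K_0(\rep(B,B'))$, under which the map induced by $\HH$ on Hom-groups is the Euler character / Dennis trace of the statement; in particular $\chi(M)\in\HH_0(B)$ is the image of the morphism $[M]\colon\Uadd(k)\to\Uadd(B)$ under $\HH$.

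First I would construct the pairing. Since $A$ is proper, the diagonal $A$-bimodule (that is, $A$ regarded as a bimodule over itself), seen as a left module over $A\otimes A^\op$, has perfect underlying complex over $k$; it therefore defines a class in $K_0(\rep(A\otimes A^\op,k))=\Hom_{\Madd}\big(\Uadd(A)\otimes\Uadd(A^\op),\Uadd(k)\big)$, i.e. a morphism $\epsilon\colon\Uadd(A)\otimes\Uadd(A^\op)\to\Uadd(k)$ in $\Madd$. Applying $\HH$ and the K\"unneth equivalence yields the pairing
\[
\langle-,-\rangle\ :=\ \HH(\epsilon)\colon\ \HH(A)\otimes_k\HH(A^\op)\ \isoto\ \HH(A\otimes A^\op)\ \too\ \HH(k)=k .
\]
One then checks that $\HH(\epsilon)$ unwinds to Shklyarov's pairing --- concretely, to his construction via the K\"unneth isomorphism in Hochschild homology together with the canonical Hochschild class of the diagonal bimodule.

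Next I would prove~\eqref{eq:HRR}. Let $M,N\in\perf(A)$ and set $DM=\bbR\mathit{Hom}_A(M,A)\in\perf(A^\op)$, with classes $[N]\colon\Uadd(k)\to\Uadd(A)$ and $[DM]\colon\Uadd(k)\to\Uadd(A^\op)$. A routine tensor--Hom computation, valid because $M$ is perfect, gives natural equivalences of $k$-complexes
\[
\bbR\mathit{Hom}_A(M,N)\ \simeq\ DM\otimes^{\bbL}_A N\ \simeq\ (N\otimes_k DM)\otimes^{\bbL}_{A\otimes A^\op}A ,
\]
and the right-hand side is exactly the bimodule composite representing $\epsilon\circ([N]\boxtimes[DM])\colon\Uadd(k)\to\Uadd(A)\otimes\Uadd(A^\op)\to\Uadd(k)$, where $[N]\boxtimes[DM]$ denotes the external product. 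Passing to $K_0(k)=\bbZ$ gives the tautological identity $\chi(M,N)=\chi\big(\bbR\mathit{Hom}_A(M,N)\big)=\epsilon_\ast\big([N]\boxtimes[DM]\big)$. Now apply the functor $\HH$: by functoriality $\HH\big(\epsilon\circ([N]\boxtimes[DM])\big)=\HH(\epsilon)\circ\HH([N]\boxtimes[DM])$; since $\HH$ is symmetric monoidal it sends external products to external products, so $\HH([N]\boxtimes[DM])=\chi(N)\otimes\chi(DM)$ as a morphism $k\to\HH(A)\otimes_k\HH(A^\op)$; and $\HH$ carries the integer $\chi(M,N)\in K_0(k)$ to $\chi(M,N)\cdot 1\in\HH_0(k)=k$ (the Dennis trace for $k$ being the canonical map $\bbZ\to k$). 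Combining these three points, $\chi(M,N)=\HH(\epsilon)\big(\chi(N)\otimes\chi(DM)\big)=\langle\chi(N),\chi(DM)\rangle$, which is~\eqref{eq:HRR}.

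The mathematical content is light: the $K_0$-identity is a formal tensor--Hom manipulation, and everything else is functoriality and monoidality of structures already available on noncommutative motives. The real work --- and the only genuinely non-formal step --- is translational: one must match the motivically defined pairing $\HH(\epsilon)$ with Shklyarov's pairing, and match $\HH$ of the motivic class $[M]$ with his Chern character $\chi(M)$ (equivalently, recognise the classical Dennis trace as the map induced by $\HH$ on motivic Hom-groups). These reduce to comparing explicit formulas, and are where I would expect the effort to lie.
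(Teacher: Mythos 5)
Your proposal is correct and follows essentially the same route as the paper. The paper first proves an abstract noncommutative HRR theorem (Theorem~\ref{thm:HRR1}) for an arbitrary strongly symmetric monoidal additive invariant $L$ --- building the pairing from the diagonal bimodule $(x,y)\mapsto\cA(y,x)$ in $\Hmo(k)$, applying $\underline{L}$, and reading the identity off the commutative square~\eqref{eq:formalncHRR} --- and then derives Shklyarov's theorem by specialising to $L=\HH$, with the translation handled by citing~\cite[Theorem~2.8]{Prods} (Chern characters for $\HH$ are Dennis traces). Your argument unrolls exactly that specialisation: same pairing $\epsilon$ (properness used only to land the diagonal bimodule in $\perf(k)$), same $K_0$-level tautology, same appeal to K\"unneth/monoidality, and you correctly single out the translational matching with Shklyarov's formulas as the only non-formal ingredient. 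The only cosmetic difference is that you phrase the Hom-set identification in terms of $\Madd$ where the paper uses $\Hmo_0(k)$; for the $K_0$-level identifications you invoke, $\Hmo_0(k)$ is the appropriate category, but this does not affect the substance of the argument.
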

In this short article we prove the following general motivic results (over a base commutative ring $k$) and show how the above equalities~\eqref{eq:equality1}-\eqref{eq:equality3} and \eqref{eq:HRR} can be recovered from them.
\begin{theorem}[Noncommutative Lefschetz]\label{thm:main}
Let $\cA$ be a smooth and proper dg category (see \S\ref{sec:dg}),
$M$ a perfect dg $\cA$-bimodule, and $L:\dgcat(k) \to \mathsf{D}$ a
symmetric monoidal\footnote{As far as monoidal categories and monoidal functors
are concerned, we shall use the terminology of MacLane's book~\cite{ML}.}
additive invariant (see Definition~\ref{def:tensor-additive2})
which becomes strongly monoidal when restricted to smooth and proper dg categories.
Assume that the base ring $k$ is local (or more generally that $K_0(k)=\bbZ$).
Then, the following equality holds in the ring of endomorphims of the unit object
of $\mathsf{D}$:
\begin{equation}\label{eq:equality-main}
\sum_i(-1)^i \, \mathrm{rk}\,  \HH_i(\cA;M) = \mathrm{Tr}\, L(\Phi_M)\,.
\end{equation}
\end{theorem}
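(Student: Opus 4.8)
The plan is to reinterpret both sides of~\eqref{eq:equality-main} as traces of endomorphisms in the symmetric monoidal category $\mathsf{D}$, and then invoke the general functoriality of traces under (strong) symmetric monoidal functors. First I would recall that a smooth and proper dg category $\cA$ is precisely a dualizable object in the symmetric monoidal category $\dgcat(k)$ (with respect to the derived tensor product, after localizing at Morita equivalences), its dual being $\cA^\op$, and that a perfect dg $\cA$-bimodule $M$ defines an endomorphism of $\cA$ in this symmetric monoidal category whose associated Fourier--Mukai type functor is $\Phi_M$. Consequently the categorical trace of $M$, taken in $\dgcat(k)$, is an endomorphism of the unit object $k$; by Hochschild-theoretic identifications (the unit being $k$ itself, and $\End(k)$ being built from $\HH(k)$), this trace is naturally identified with the Hochschild homology class of $M$, i.e.\ with the Euler characteristic $\sum_i(-1)^i\,\mathrm{rk}\,\HH_i(\cA;M)$ once we use the hypothesis $K_0(k)=\bbZ$ to collapse the target to $\bbZ$.

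Next I would apply the invariant $L$. Since $L$ is symmetric monoidal and becomes strongly monoidal on smooth and proper dg categories, it sends the dualizable object $\cA$ to a dualizable object $L(\cA)$ of $\mathsf{D}$, with dual $L(\cA^\op)$, and it sends the endomorphism $M$ of $\cA$ to the endomorphism $L(\Phi_M)$ of $L(\cA)$. The key formal fact — which I would state as a lemma and attribute to the standard theory of traces in symmetric monoidal categories (the ``preservation of traces under monoidal functors'' principle) — is that for a strong symmetric monoidal functor $F$ and a dualizable object $x$ with endomorphism $f$, one has $F(\mathrm{Tr}(f)) = \mathrm{Tr}(F(f))$ as endomorphisms of the unit object of the target. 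Applying this with $F = L$, $x = \cA$, $f = M$ yields
\begin{equation*}
L\bigl(\mathrm{Tr}_{\dgcat(k)}(M)\bigr) = \mathrm{Tr}_{\mathsf{D}}\bigl(L(\Phi_M)\bigr)\,.
\end{equation*}
Combining this with the identification of the left-hand trace with the Hochschild Euler characteristic, and noting that $L$ is additive so that it sends the class of $\mathrm{Tr}_{\dgcat(k)}(M)$ in $K_0(k) \cong \bbZ$ to the corresponding integer multiple of the identity of the unit of $\mathsf{D}$, gives exactly~\eqref{eq:equality-main}.

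The main obstacle I anticipate is the careful bookkeeping needed to make the trace in $\dgcat(k)$ literally compute $\sum_i(-1)^i\,\mathrm{rk}\,\HH_i(\cA;M)$: one must check that the categorical trace, which a priori lives in $\End_{\dgcat(k)}(k)$, is correctly identified with a Hochschild homology invariant and then with an actual integer, and this is where the hypothesis that $k$ is local (or $K_0(k)=\bbZ$) is essential — it is what forces the relevant class group to be $\bbZ$ so that ``rank'' is well defined and additive. A secondary subtlety is ensuring that the additive (rather than merely monoidal) nature of $L$ is used only at the level of $K_0$, i.e.\ that additivity of $L$ guarantees $L$ is a ring map on $\End$ of the unit so that an integer on the source maps to the same integer on the target. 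Granting the symmetric-monoidal-trace formalism and these identifications, the proof is then purely formal; the substance is entirely in setting up the dictionary between dualizability, Fourier--Mukai functors, Hochschild homology, and categorical traces, which I would carry out in the preliminary sections before giving this argument.
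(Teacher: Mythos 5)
Your proposal is correct and takes essentially the same approach as the paper: both rest on (a) the rigidity of smooth and proper dg categories in the (Morita-localized, $K_0$-ified) motivic category $\NChow(k)$, (b) the computation that the categorical trace of $\Phi_{[M]}$ there is $[\HH(\cA;M)]\in K_0(k)\cong\bbZ$ (the paper's Proposition~\ref{prop:absolutetracencmot}), and (c) the preservation of traces under the strong symmetric monoidal additive functor $\underline L$ induced by $L$ via Proposition~\ref{prop:universalmonoidal} (the paper's Lemma~\ref{lemma:abstracttraces}). The only imprecision is that the trace must be taken in $\NChow(k)$ (not literally in $\Hmo(k)$, where $\End(\unit)=\mathrm{Iso}\,\cD_c(k)$ is only a monoid), but you correctly flag that the passage to $K_0$ and the hypothesis $K_0(k)=\bbZ$ are what turn this into an integer.
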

\begin{theorem}\label{thm:main2}
Let $k$ be a field, $X$ a smooth and proper $k$-scheme, and $\cE$ a bounded complex of coherent $\cO_{X\times X}$-modules.
Then, for every Weil cohomology $H^\ast$ defined on smooth and proper
$k$-schemes, we have the equality
\begin{equation}\label{eq:equality-main2}
\sum_i (-1)^i \, \mathrm{dim} \, \HH_i(\cE)
= \mathrm{Tr}\,  H^{\mathrm{even}}(\Phi_\cE) - \mathrm{Tr}\,  H^{\mathrm{odd}}(\Phi_\cE)\, ,
\end{equation}
where $H^{\mathrm{even}}$ and $H^{\mathrm{odd}}$ are the
the even and odd parts of $H^\ast$.
\end{theorem}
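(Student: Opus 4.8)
The plan is to deduce the statement from the noncommutative Lefschetz Theorem~\ref{thm:main}, applied to a suitably chosen triple $(\cA,M,L)$. First I would take $\cA=\perfdg(X)$, the canonical dg enhancement of the derived category of perfect complexes on $X$. Since $X$ is smooth and proper over $k$, the dg category $\cA$ is smooth and proper, with $\cA^\op\simeq\perfdg(X)$ and $\cA\otimes\cA^\op\simeq\perfdg(X\times X)$; in particular every dg category entering the argument is of geometric origin. As $X\times X$ is smooth, the bounded complex $\cE$ of coherent $\cO_{X\times X}$-modules is perfect, hence determines a perfect dg $\cA$-bimodule $M$, and under the equivalence above the Fourier-Mukai functor $\Phi_\cE$ corresponds to $\Phi_M$. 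I would also invoke the standard comparison $\HH_\ast(\cA;M)\cong\HH_\ast(X,\cE)$ between the Hochschild homology of $\cA$ with bimodule coefficients and that of $X$ with coefficients in the kernel $\cE$; since $k$ is a field, this gives $\sum_i(-1)^i\,\mathrm{rk}\,\HH_i(\cA;M)=\sum_i(-1)^i\,\mathrm{dim}\,\HH_i(\cE)$.

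The crucial ingredient is the symmetric monoidal additive invariant $L$ attached to the Weil cohomology $H^\ast$. By its axioms, $H^\ast$ extends to a symmetric monoidal $K$-linear functor on Chow motives $\Chow(k)_K\to\mathsf{GrVect}_K$, where $K$ is the coefficient field of $H^\ast$ and $\mathsf{GrVect}_K$ denotes finite-dimensional $\bbZ$-graded $K$-vector spaces. Reducing the grading modulo $2$ turns the Lefschetz motive --- whose cohomology sits in the even degree $2$ --- into an even, and hence trivial, invertible object, so the induced functor $\Chow(k)_K\to\mathsf{sVect}_K$ into finite-dimensional super ($\bbZ/2$-graded) $K$-vector spaces factors through the category $\NChow(k)_K$ of noncommutative Chow motives with $K$-coefficients. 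Combining this realization with the universal additive invariant $\Uadd$, one extracts a symmetric monoidal additive invariant $L\colon\dgcat(k)\to\mathsf{sVect}_K$ with $L(\perfdg(Y))\cong\big(H^{\mathrm{even}}(Y),\,H^{\mathrm{odd}}(Y)\big)$ for every smooth and proper $k$-scheme $Y$, and which becomes strongly monoidal when restricted to smooth and proper dg categories. I expect this to be the main obstacle: assembling $L$ and, above all, checking strong monoidality on \emph{all} smooth and proper dg categories (not merely those of the form $\perfdg(Y)$) rests on the K\"unneth isomorphism $H^\ast(Y\times Y)\cong H^\ast(Y)\otimes H^\ast(Y)$ together with the rigidity of $\NChow(k)_K$ and the fact that the realization is already strongly monoidal there.

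With this $L$, Theorem~\ref{thm:main} applies --- a field satisfies $K_0(k)=\bbZ$ --- and gives, in the ring $\mathrm{End}(\unit)=K$,
\[
\sum_i(-1)^i\,\mathrm{rk}\,\HH_i(\cA;M)=\mathrm{Tr}\,L(\Phi_M)\,.
\]
By the first paragraph the left-hand side equals $\sum_i(-1)^i\,\mathrm{dim}\,\HH_i(\cE)$. On the right-hand side, the categorical trace in $\mathsf{sVect}_K$ is the super-trace, so under the identification $L(\cA)\cong\big(H^{\mathrm{even}}(X),H^{\mathrm{odd}}(X)\big)$ one has $\mathrm{Tr}\,L(\Phi_M)=\mathrm{Tr}\big(L(\Phi_M)^{\mathrm{even}}\big)-\mathrm{Tr}\big(L(\Phi_M)^{\mathrm{odd}}\big)$. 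It then remains to identify $L(\Phi_M)$ with the correspondence on $H^\ast(X)$ attached to the class $\chern(\cE)\cdot\sqtodd_X$, which is the Grothendieck-Riemann-Roch type compatibility between the noncommutative realization and the classical one; and since only the trace intervenes --- the $\sqtodd_X$-twist being conjugation by cup product with $\sqtodd_X$ --- one may here equally work with the uncorrected $\chern(\cE)$ correspondence. Hence $\mathrm{Tr}\,L(\Phi_M)=\mathrm{Tr}\,H^{\mathrm{even}}(\Phi_\cE)-\mathrm{Tr}\,H^{\mathrm{odd}}(\Phi_\cE)$, which together with the computation of the left-hand side yields~\eqref{eq:equality-main2}.
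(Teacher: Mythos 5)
Your reduction of the left-hand side to $\sum_i(-1)^i\,\mathrm{rk}\,\HH_i(\cA;M)$ with $\cA=\perf(X)$ is fine, but the core of your argument --- manufacturing a symmetric monoidal additive invariant $L:\dgcat(k)\to\mathrm{Vect}_{\bbZ/2}(K)$ out of the Weil cohomology $H^\ast$ so as to apply Theorem~\ref{thm:main} --- has a genuine gap, which you flag as ``the main obstacle'' but do not fill. The periodized realization $H^\per$ is only defined on $\Chow^\per_\bbQ(k)$, whose image in $\Hmo_{0,\bbQ}(k)$ under the embedding of Theorem~\ref{thm:categoricalGRR} is the full subcategory $\bbP_{0,\bbQ}(k)$ of motives of geometric origin. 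What you need is therefore not a \emph{factorization} through $\NChow(k)$ but an \emph{extension} from $\bbP_{0,\bbQ}(k)$ to all of $\Hmo_0(k)$ (or at least to all smooth and proper dg categories) that is still additive and strongly monoidal; K\"unneth plus rigidity gives no such extension, and producing one is a genuinely hard problem (compare the special case of periodic cyclic homology in Example~\ref{Example:HP}, which rests on nontrivial theorems of Kassel and on \cite[Theorem~7.2]{Galois}), not a formality. The paper avoids this entirely by never pushing $H^\ast$ up to $\dgcat(k)$: since $\iota$ is fully faithful and strongly symmetric monoidal, the trace of the endomorphism $\Phi_\cE=\chern(\cE)\cdot\sqtodd_{X\times X}$ of $M(X)$ \emph{computed inside $\Chow^\per_\bbQ(k)$} equals its trace in $\Hmo_{0,\bbQ}(k)$, which is $\sum_i(-1)^i\dim\HH_i(\cE)$ by Corollary~\ref{cor:absolutetracencmotcomm}; one then applies Lemma~\ref{lemma:abstracttraces} to the functor $H^\per$ on $\Chow^\per_\bbQ(k)$ and reads off the right-hand side from \eqref{eq:defFMChowper3}. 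The two realizations are thus compared through their common source $\Chow^\per_\bbQ(k)$, which is exactly what makes the argument formal.

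A secondary error: your parenthetical claim that the $\sqtodd$-twist may be dropped because ``only the trace intervenes'' is false. The corrected correspondence acts as $\alpha\mapsto(\pi_2)_\ast\bigl(\pi_1^\ast(\alpha\cdot\sqtodd_X)\cdot\chern(\cE)\bigr)\cdot\sqtodd_X$, i.e.\ as $T\circ\Phi_0\circ T$ where $T$ is cup product by $\sqtodd_X$ and $\Phi_0$ is the uncorrected correspondence; this is not a conjugation, and $\mathrm{Tr}(T\Phi_0T)=\mathrm{Tr}\bigl(\Phi_0\circ(\cup\,\todd_X)\bigr)\neq\mathrm{Tr}(\Phi_0)$ in general. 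Already for $\cE=\cO_{X\times X}$ the corrected trace is $\int_X\todd_X=\chi(X,\cO_X)=\chi(\HH(\cE))$, while the uncorrected one vanishes as soon as $\dim X>0$. The Todd correction is precisely the content of Theorem~\ref{thm:categoricalGRR} and cannot be discarded.
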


\begin{theorem}[Noncommutative Hirzebruch-Riemman-Roch]\label{thm:HRR1}
Let $L:\dgcat(k)\to \mathsf{D}$ be
a strongly symmetric monoidal additive invariant (see Definition~\ref{def:tensor-additive2}),
and $\unit$ the unit object of $\mathsf{D}$.
For any dg category $\cA$, there is a canonical Chern character map
$$\chern_L:K_0(\cA)\too \Hom_\mathsf{D}(\unit,L(\cA))\, .$$
Moreover, in the case where $\cA$ is proper, there is a canonical pairing in $\mathsf{D}$
$$L(\cA)\otimes L(\cA^\op)\too\unit$$
and consequently a canonical pairing of abelian groups
$$\langle -,- \rangle:\Hom_\mathsf{D}(\unit,L(\cA))\otimes
\Hom_\mathsf{D}(\unit,L(\cA^\op))\to\End_\mathsf{D}({\bf 1})\,.$$
Furthermore, for any two perfect dg $\cA$-modules $M$ and $N$, the following equality
\begin{equation}\label{eq:HRR1}
ch_L(\bbR\mathit{Hom}_\cA(M,N))=\langle ch_L(N), ch_L(DM)\rangle
\end{equation}
holds in $\End_{\mathsf{D}}({\bf 1})$.
\end{theorem}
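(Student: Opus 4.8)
The plan is to push everything down to a formal identity of morphisms in the category $\Hmo_0$ of (additive) noncommutative motives, and then apply $L$. First, recall that an additive invariant $L\colon\dgcat(k)\to\mathsf{D}$ factors, symmetric-monoidally, through the canonical functor $\cU\colon\dgcat(k)\to\Hmo_0$, where $\Hmo_0$ is the additive symmetric monoidal category with $\Hom_{\Hmo_0}(\cA,\cB)=K_0(\rep(\cA,\cB))$ — the Grothendieck group of the triangulated category of right-quasi-representable $\cA$–$\cB$-bimodules — and $\cU(\uk)=\unit$; write $\overline L\colon\Hmo_0\to\mathsf{D}$ for the induced (additive, symmetric monoidal) functor, so $\overline L\circ\cU=L$ and $\overline L(\uk)=L(\uk)\cong\unit$. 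Since $\rep(\uk,\cA)$ is (Morita-equivalently) the triangulated category $\perf(\cA)$ of perfect dg $\cA$-modules, one gets a natural identification $\Hom_{\Hmo_0}(\uk,\cA)=K_0(\cA)$, a perfect module $M$ corresponding to the morphism $\uk\to\cA$ it represents. I define $\chern_L$ to be $\overline L$ on these $\Hom$-groups:
\[
\chern_L\colon\ K_0(\cA)=\Hom_{\Hmo_0}(\uk,\cA)\ \xrightarrow{\ \overline L\ }\ \Hom_{\mathsf{D}}\!\bigl(\overline L(\uk),\overline L(\cA)\bigr)=\Hom_{\mathsf{D}}(\unit,L(\cA)).
\]
Being induced by a functor of additive categories, this is automatically a homomorphism of abelian groups (so, in particular, well defined on $K_0$), and $\chern_L([M])=L(M)$.

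Next, the pairing. When $\cA$ is proper, every $\cA(x,y)$ is a perfect $k$-complex, so the diagonal bimodule $\Id_\cA$, regarded as an $(\cA\otimes\cA^\op)$–$\uk$-bimodule, is right-quasi-representable (a $\uk$-module is just a $k$-complex) and therefore represents a morphism $\eta\colon\cA\otimes\cA^\op\to\uk$ in $\Hmo_0$. Applying $\overline L$ and using strong monoidality to identify $L(\cA\otimes\cA^\op)\cong L(\cA)\otimes L(\cA^\op)$, I obtain the pairing $L(\cA)\otimes L(\cA^\op)\to\overline L(\uk)=\unit$; the associated pairing of abelian groups is $\langle f,g\rangle:=(\text{pairing})\circ(f\otimes g)$ for $f\colon\unit\to L(\cA)$ and $g\colon\unit\to L(\cA^\op)$, using the coherence isomorphism $\unit\cong\unit\otimes\unit$.

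It then suffices to establish the single equality of morphisms $\uk\to\uk$ in $\Hmo_0$,
\[
\bigl[\bbR\mathit{Hom}_\cA(M,N)\bigr]\ =\ \eta\circ\bigl(N\otimes DM\bigr)\qquad\text{in }\ \Hom_{\Hmo_0}(\uk,\uk)=K_0(k),
\]
where on the right $N\colon\uk\to\cA$ and $DM\colon\uk\to\cA^\op$ are the $\Hmo_0$-morphisms attached to the perfect modules $N$ and $DM=\bbR\mathit{Hom}_\cA(M,\cA)$ (which is perfect over $\cA^\op$ since $M$ is perfect), $N\otimes DM\colon\uk\otimes\uk\to\cA\otimes\cA^\op$ is their monoidal product, and $\bbR\mathit{Hom}_\cA(M,N)$ is perfect over $k$ because $\cA$ is proper and $M,N$ are perfect. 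Granting this, applying the symmetric monoidal functor $\overline L$ turns the left side into $\chern_L(\bbR\mathit{Hom}_\cA(M,N))$ and the right side, after the monoidal comparison isomorphism, into $(\text{pairing})\circ\bigl(L(N)\otimes L(DM)\bigr)=\langle\chern_L(N),\chern_L(DM)\rangle$, i.e.\ exactly \eqref{eq:HRR1}. To prove the displayed $\Hmo_0$-equality I would unwind the composition law of $\Hmo_0$: the composite $\eta\circ(N\otimes DM)$ is represented by the $k$-complex $(N\otimes DM)\otimes^{\bbL}_{\cA\otimes\cA^\op}\Id_\cA$, contracting the diagonal bimodule rewrites this as $N\otimes^{\bbL}_\cA DM$, and the standard identification $N\otimes^{\bbL}_\cA DM\simeq\bbR\mathit{Hom}_\cA(M,N)$ for a perfect module $M$ concludes.

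The first two steps are just bookkeeping of the formalism of additive invariants together with the definition of properness; the content is in the last step, and the hard part there is twofold: (i) keeping track of the composition in $\Hmo_0$ as the $K_0$-class of a derived tensor product of the underlying bimodules, in particular that it respects the triangulated structure and descends to $K_0$; and (ii) the dg-module computations ``contract the diagonal bimodule'' and $N\otimes^{\bbL}_\cA DM\simeq\bbR\mathit{Hom}_\cA(M,N)$, where one must be careful with left/right module conventions and with the passage between $\mathsf{D}(\cA)$, $\perf(\cA)$ and $\rep(\uk,\cA)$. These are well-known facts about dg categories, so once they are in place the theorem follows formally.
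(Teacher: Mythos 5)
Your proposal is correct and follows essentially the same route as the paper: the Chern character is $\underline{L}$ on $\Hom_{\Hmo_0(k)}(k,\cA)=K_0(\cA)$, the pairing comes from the diagonal bimodule $(x,y)\mapsto\cA(y,x)$ viewed as a morphism $\cA\otimes^\bbL\cA^\op\to k$ in $\Hmo(k)$ (well defined by properness), and the formula is the image under the strong monoidal functor $\underline{L}$ of the identity $\langle N,DM\rangle=[\bbR\mathit{Hom}_\cA(M,N)]$ in $K_0(k)$. The only cosmetic difference is that the paper verifies this last identity by reducing, up to Morita equivalence, to the representable case $M=\cA(-,x)$, $DM=\cA(x,-)$, whereas you compute the bimodule contraction $N\otimes^\bbL_\cA DM\simeq\bbR\mathit{Hom}_\cA(M,N)$ directly.
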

Consult Sections \ref{section:nclefschetz} and \ref{sec:HRR}
for the proofs of these three statements. The proofs of Theorems \ref{thm:main} and \ref{thm:HRR1} rely on formal arguments, namely on the fact that symmetric monoidal additive invariants correspond to
symmetric monoidal functors from an adequate category of noncommutative motives, whose
Hom's can be understood as $K_0$-groups; see Proposition \ref{prop:universalmonoidal}.
The proof of Theorem \ref{thm:main2}
is also quite formal, once we have related the classical category of Chow motives
with the aforementioned category of noncommutative motives. This is
just a categorical reformulation of the Grothendieck-Riemann-Roch theorem; see
Theorem \ref{thm:categoricalGRR}. The proof of the equalities
\eqref{eq:equality-main}, \eqref{eq:equality-main2} and \eqref{eq:HRR1} follows also the same pattern: the left-hand-sides are numbers obtained from a motivic construction (trace or pairing
in a suitable category of motives) which one manages to compute using
the functorialities of the theory of dg categories. The right-hand-sides are similarly obtained from the realisation of the original data. In conclusion, these equalities follow simply from the fact that realizations of motives are strongly symmetric monoidal, \ie they satisfy K\"unneth formulas.

\subsection*{Acknowledgments} The authors are very grateful to the anonymous referee for all his/her
comments that made us improve this short article.

\subsection*{Notations}\label{sec:notations}
The letter $k$ will always stand for a commutative ring with unit $k$.
Given a quasi-compact separated $k$-scheme $X$, we will denote by $\perf(X)$ the dg category
of perfect complexes of $\cO_X$-modules, and by $\cD_\perf(X)$ the corresponding
triangulated category. Recall that if $X$ is regular then every bounded complex of coherent sheaves on
$X$ is perfect (up to quasi-isomorphism).

Given an essentially small category $\cC$, we will write $\mathrm{Iso}\, \cC$ for the
set of isomorphism classes of objects of $\cC$.

\section{Differential graded categories}\label{sec:dg}
Let $k$ be a base commutative ring and $\cC(k)$ the category of dg $k$-modules. A {\em differential graded (=dg) category} $\cA$ is a category enriched over $\cC(k)$ (morphism sets $\cA(x,y)$ are dg $k$-modules) in such a way that composition fulfills the Leibniz rule: $d(f\circ g) = d(f) \circ g + (-1)^{\mathrm{deg}(f)}f \circ d(g)$. A {\em dg functor} $F:\cA \to \cB$ is a functor enriched over $\cC(k)$; consult Keller's ICM address~\cite[\S2.2-2.3]{ICM}. In what follows we will write $\dgcat(k)$ for the category of (small) dg categories and dg functors. 


Let $\cA$ be a dg category. Its {\em opposite} dg category $\cA^\op$ has the same objects and dg
$k$-modules of morphisms given by $\cA^\op(x,y):=\cA(y,x)$. A (right) dg {\em $\cA$-module} is a
dg functor $M:\cA^\op \to \cC^\dg(k)$ with values in the dg category $\cC^\dg(k)$ of dg $k$-modules.
Let us denote by $\widehat{\cA}$ the category of dg $\cA$-modules; see \cite[\S2.3]{ICM}.
Recall from \cite[\S3.2]{ICM} that the {\em derived category $\cD(\cA)$ of $\cA$} is the
localization of $\widehat{\cA}$ with respect to the class of objectwise quasi-isomorphisms.
Its full subcategory of compact objects (see \cite[Def.~4.2.7]{Neeman}) will be denoted by
$\cD_c(\cA)$. A dg functor $F:\cA \to \cB$ is called a {\em Morita equivalence} if the restriction
of scalars functor $\cD(\cB) \stackrel{\sim}{\to} \cD(\cA)$ is an equivalence of triangulated
categories; see \cite[\S4.6]{ICM}.

Let $\Hmo(k)$ be the localization of $\dgcat(k)$ with respect to the class of Morita equivalences.
The tensor product of dg categories can be naturally derived $-\otimes^\bbL-$ giving thus rise to a
symmetric monoidal structure on $\Hmo(k)$; see \cite[Remark~5.11]{IMRN}.
A {\em $\cA\text{-}\cB$-bimodule $M$} is a dg functor $M:\cA \otimes^\bbL \cB^\op\to \cC^\dg(k)$,
\ie a dg $(\cA^\op \otimes^\bbL \cB)$-module.
Given any two dg categories
$\cA$ and $\cB$, we have a bijection 
\begin{eqnarray}\label{eq:bij}
\mathrm{Iso}\,\rep(\cA,\cB) \stackrel{\sim}{\too} \Hom_{\Hmo(k)}(\cA,\cB) &&
M \longmapsto \left(x\longmapsto \cA(-,x)\otimes^\bbL_\cA M \right)\,,
\end{eqnarray}
where $\rep(\cA,\cB)$ is the full triangulated subcategory of $\cD(\cA^\op \otimes^\bbL\cB)$ spanned by the dg
$\cA\text{-}\cB$-bimodules $M$ such that for every object $x \in \cA$ the associated dg $\cB$-module $M(-,x)$ belongs
to $\cD_c(\cB)$; consult \cite[Cor.~5.10]{IMRN} for further details. Moreover, under the above bijection \eqref{eq:bij}, the composition law in $\Hmo(k)$ corresponds to the (derived) tensor product of bimodules. 

Following Kontsevich~\cite{IAS,ENS}, a dg category $\cA$ is called  {\em smooth} if it is perfect as a
dg bimodule over itself. A dg category $\cA$ is said to be {\em proper} if for every ordered pair of objects $(x,y)$ the dg $k$-module $\cA(x,y)$ is perfect\footnote{If the
base ring is not a field, this notion of properness really belongs to the realm of
derived geometry; perfect complexes over flat and proper $k$-schemes still give
examples, though.}.
If $\cA$ is smooth and proper we have an equivalence of triangulated categories
$\rep(\cA,\cB)\simeq \cD_c(\cA^\op \otimes^\bbL \cB)$.
Consequently, \eqref{eq:bij} reduces to the bijection
\begin{eqnarray*}
\mathrm{Iso}\, \cD_c(\cA^\op \otimes^\bbL \cB) \stackrel{\sim}{\too} \Hom_{\Hmo(k)}(\cA,\cB)&&
M\longmapsto \Phi_M:=\left(x\longmapsto \cA(-,x)\otimes^\bbL_\cA M \right) \,.
\end{eqnarray*}
In fact, smooth and proper dg categories are precisely the
rigid objects in $\Hmo(k)$; see \cite[Theorem~4.8]{CT1}. More precisely, if $\cA$ is smooth and proper then $\cA^\op$ is its dual and for any two dg categories $\cB$ and $\cC$ one has
$$\Hom_{\Hmo(k)}(\cA\otimes^\bbL\cB,\cC)\simeq
\Hom_{\Hmo(k)}(\cB,\cA^\op\otimes^\bbL\cC)\, .$$
For instance, for any smooth and proper $k$-scheme $X$, the dg category $\perf(X)$
is smooth and proper (this follows immediately from To\"en's
beautiful results \cite[Lemma 8.11 and Theorem 8.15]{Toen}).
\section{Category of integral kernels}\label{sec:category-kernels}
The category $\mathbf{P}(k)$ of {\em integral kernels} is the full subcategory of $\Hmo(k)$
whose objects are the dg categories isomorphic to $\perf(X)$ for any smooth and proper
$k$-scheme $X$. Let us choose, for each smooth and proper $k$-scheme $X$,
and equivalence
\begin{eqnarray}\label{eq:isom0}
\perf(X)^\op\simeq\perf(X)
\end{eqnarray}
(that is, let us identify $\perf(X)$ with its dual). We have then, for any two
smooth and proper $k$-schemes $X$ and $Y$, an identification
of the form
\begin{eqnarray}\label{eq:isom00}
\perf(X)^\op\otimes^\bbL\perf(Y)\simeq\perf(X)\otimes^\bbL\perf(Y)
\simeq\perf(X\times Y)
\end{eqnarray}
where the latter isomorphism follows automatically from \cite[Lemma 8.11]{Toen}
for instance.
Using \eqref{eq:bij}, we thus get a natural bijection
\begin{eqnarray}\label{eq:isom000}
\Hom_{\Hmo(k)}(\perf(X),\perf(Y))\simeq\mathrm{Iso}\,\cD_\perf(X\times Y)\, . 
\end{eqnarray}
In the case where \eqref{eq:isom0} is induced by the
duality functor $\cE\mapsto \bbR\mathit{Hom}(\cE,\cO_X)$,
the bijection \eqref{eq:isom000} can be described explicitely as
\begin{eqnarray}\label{eq:isom1}
\mathrm{Iso}\,\cD_\perf(X\times Y) \stackrel{\sim}{\too}
\Hom_{\Hmo(k)}(\perf(X),\perf(Y)) && \cF \mapsto \Phi_\cF\,,
\end{eqnarray}
where $\Phi_\cF$ is the {\em Fourier-Mukai dg functor}
\begin{eqnarray}\label{eq:isom2}
\Phi_\cF: \perf(X) \too \perf(Y) && \cG \mapsto \bbR(\pi_Y)_\ast(\pi^\ast_X(\cG)\otimes^\bbL \cF)
\end{eqnarray}
(see To\"en's proof of \cite[Theorem 8.15]{Toen}).
If one prefers to define \eqref{eq:isom0} using
Grothendieck-Serre duality, that is via the functor $\cE\mapsto \bbR\mathit{Hom}(\cE,\cK_X)$ with $\cK_X$ the dualizing complex, then the bijection \eqref{eq:isom000}
gives the description of the category $\mathbf{P}(k)$ considered by
Caldararu and Willerton in \cite{Cald} for instance\footnote{The
category $\mathbf{P}(k)$ is in fact the truncation of the bicategory
studied in \cite{Cald} (in the case where $k$ is a field).}.
Note however that the choice of the isomorphisms \eqref{eq:isom0} does not really matters,
as any two duals of $\perf(X)$ are uniquely isomorphic (as duals),
so that the corresponding
theories of pairings and trace maps cannot be seriously affected by any such
choices\footnote{For the reader who prefers to have explicit computations, at least
through the lens of Hochschild homology, we refer to the work of Ramadoss \cite{Ramadoss}.}.
In what follows, we will thus
work using the explicit identification \eqref{eq:isom1}, which corresponds to the convention
adopted in the work of To\"en~\cite{Toen} and Lunts~\cite{Lunts}. In this case, we can understand
the structure of monoidal category of $\mathbf{P}(k)$ as follows.
Given perfect complexes $\cF\in \cD_\perf(X\times Y)$ and $\cG \in \cD_\perf(Y\times Z)$,
their composition is the perfect complex
$$\bbR(\pi_{XZ})_\ast (\pi^\ast_{XY}(\cF) \otimes^\bbL \pi^\ast_{YZ}(\cG))\in \cD_\perf(X\times Z)\, ,$$
(where $\pi_{ST}$ denotes the canonical projection from $X\times Y\times Z$ to $S\times T$).
The identity of an object $\perf(X)$ of $\mathbf{P}(k)$ is given by the structure
sheaf $\bbR\Delta_\ast(\cO_X) \in \cD_\perf(X\times X)$ of the diagonal $\Delta$ of $X\times X$.
Given perfect complexes $\cE \in \cD_\perf(X\times W)$ and $\cF \in \cD_\perf(Y\times Z)$,
their product is
$$ \cE \boxtimes\cF \in \cD_\perf(X\times Y \times W \times Z)\,.$$
This explicit description of $\mathbf{P}(k)$ will be mainly used
to make the link between noncommutative Chow motives and classical intersection
theory; see Theorem \ref{thm:categoricalGRR}.
\section{Noncommutative (Chow) motives}\label{sec:Chow}
Let $\Hmo_0(k)$ be the additive category with the same objects as $\Hmo(k)$ and with morphisms given by $\Hom_{\Hmo_0(k)}(\cA,\cB):= K_0 \rep(\cA,\cB)$ where $K_0\rep(\cA,\cB)$ is the Grothendieck group of the triangulated category $\rep(\cA,\cB)$. The composition law is induced from the one on $\Hmo(k)$; consult \cite[\S6]{IMRN}. The derived tensor product on $\Hmo(k)$ extends by linearity to $\Hmo_0(k)$ giving rise to a symmetric monoidal structure. Moreover, there is a natural sequence of symmetric monoidal functors
\begin{equation}\label{eq:universal}
\cU: \dgcat(k) \too \Hmo(k) \stackrel{[-]}{\too} \Hmo_0(k)\,.
\end{equation}
The first one is the identity on objects and sends a dg functor $F:\cA \to \cB$ to the corresponding $\cA\text{-}\cB$-bimodule. The functor $[-]$ is also the identity on objects and sends a $\cA\text{-}\cB$-bimodule $M$ to its class $[M]$ in the Grothendieck group $K_0\rep(\cA,\cB)$.

The category $\NChow(k)$ of {\em noncommutative (Chow) motives} is by definition the idempotent completion of full subcategory of $\Hmo_0(k)$ of the smooth and proper dg categories. Note that we have the following description of its morphism
sets
$$\Hom_{\NChow(k)}(\cA,\cB)=K_0(\cD_c(\cA^\op \otimes^\bbL\cB))=K_0(\cA^\op \otimes^\bbL \cB)\,.$$ 
Note also that since the smooth and proper dg categories are stable under (derived) tensor product, the category $\NChow(k)$ is symmetric monoidal. Since the rigid objects in the symmetric monoidal category $\Hmo(k)$ are precisely the smooth and proper dg categories, $\NChow(k)$ is a rigid tensor category.
\section{Symmetric monoidal additive invariants}\label{sec:symmetric}
Let $\cA$ be a dg category. Consider the dg category $T(\cA)$ whose objects are the pairs $(i,x)$,
with $i \in \{1,2\}$ and $x$ an object of $\cA$. The dg $k$-module
$T(\cA)((i,x),(i',x'))$ is equal to $\cA(x,x')$ if $i'\geq i$ and is $0$ otherwise.
Composition is induced from $\cA$; consult \cite[\S4]{IMRN} for details.
Note that we have two natural inclusion dg functors
\begin{eqnarray*}
\iota_1 : \cA \too T(\cA) && \iota_2: \cA \too T(\cA)\,.
\end{eqnarray*}

\begin{definition}\label{def:tensor-additive}
Let $L:\dgcat(k) \to \mathsf{D}$ be a functor with values in an additive category. We say that $L$ is an {\em additive invariant} if it satisfies the following two conditions:
\begin{itemize}
\item[(i)] it sends Morita equivalences to isomorphisms;
\item[(ii)] the above inclusion dg functors $\iota_1$ and $\iota_2$ induce an isomorphism\footnote{Condition (ii) can be equivalently formulated in terms of semi-orthogonal decompositions in the sense of Bondal-Orlov; see \cite[Thm.~6.3(4)]{IMRN}.}
$$ [L(\iota_1)\,\, L(\iota_2)]: L(\cA) \oplus L(\cA) \stackrel{\sim}{\too} L(T(\cA))\,.$$
\end{itemize}
\end{definition}
\begin{theorem}{(\cite[Thms. 5.3 and 6.3]{IMRN})}\label{thm:universal}
The above functor \eqref{eq:universal} is the {\em universal additive invariant}, \ie given any additive category $\mathsf{D}$ we have an induced equivalence of categories
\begin{equation}\label{eq:induced}
\cU^\ast: \Fun_{\mathsf{add}}(\Hmo_0(k),\mathsf{D}) \stackrel{\sim}{\too} \Fun_{\mathsf{A}}(\dgcat(k),\mathsf{D})\,,
\end{equation}
where the left-hand-side denotes the category of additive functors from $\Hmo_0(k)$ to $\mathsf{D}$ and the right-hand-side the category of additive invariants with values in $\mathsf{D}$.
\end{theorem}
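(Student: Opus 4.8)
The plan is to verify that the functor $\cU^\ast$ of~\eqref{eq:induced}, namely precomposition with the sequence~\eqref{eq:universal}, is well defined, fully faithful, and essentially surjective. For well-definedness one checks that $\cU$ is itself an additive invariant: then composing any additive functor $\Hmo_0(k)\to\mathsf{D}$ with $\cU$ again gives an additive invariant, since additive functors preserve biproducts and isomorphisms. That $\cU$ sends Morita equivalences to isomorphisms is built into the definitions of $\Hmo(k)$ and $\Hmo_0(k)$. For condition~(ii) one must see that the morphism $[\cU(\iota_1)\,\,\cU(\iota_2)]\colon\cU(\cA)\oplus\cU(\cA)\to\cU(T(\cA))$ is invertible in $\Hmo_0(k)$; by the Yoneda lemma it suffices to turn it into a bijection after applying $\Hom_{\Hmo_0(k)}(-,\cB)=K_0\rep(-,\cB)$ for every $\cB$, and this amounts to saying that restriction along $\iota_1,\iota_2$ identifies $K_0\rep(T(\cA),\cB)$ with $K_0\rep(\cA,\cB)\oplus K_0\rep(\cA,\cB)$, which one reads off from the semiorthogonal decomposition of $\rep(T(\cA),\cB)$ (equivalently, of $\cD_c(T(\cA)^\op\otimes^\bbL\cB)$) obtained by extending by zero along the two inclusions.

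Full faithfulness rests on the observation that every morphism of $\Hmo_0(k)$ comes from dg functors. Since $\rep(\cA,\cB)$ is triangulated, every class in $K_0\rep(\cA,\cB)$ equals $[M]$ for a single bimodule $M$ (using $[W[1]]=-[W]$ and additivity under $\oplus$); and by~\eqref{eq:bij} such an $M$ is classified by a dg functor $\tilde M\colon\cA\to\widehat{\cB}$ with values in a dg category $\widehat{\cB}$ that is Morita equivalent to $\cB$ --- for instance the dg category of perfect $\cB$-modules, with $h\colon\cB\to\widehat{\cB}$ the Yoneda functor --- whence $[M]=\cU(h)^{-1}\circ\cU(\tilde M)$ in $\Hmo_0(k)$. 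Because $\cU$ is the identity on objects and every morphism has this shape, a natural transformation between additive functors on $\Hmo_0(k)$ is exactly a natural transformation between their restrictions along $\cU$: the families of components agree, and naturality with respect to $\cU(h)^{-1}\circ\cU(\tilde M)$ is forced by naturality with respect to $\cU(\tilde M)$ and $\cU(h)$, the latter being carried to an isomorphism by any additive functor.

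For essential surjectivity, let $L\colon\dgcat(k)\to\mathsf{D}$ be an additive invariant. Condition~(i) and the universal property of the localization $\dgcat(k)\to\Hmo(k)$ produce a unique factorization $L=L_1\circ\big(\dgcat(k)\to\Hmo(k)\big)$, and it remains to factor $L_1$ through $[-]\colon\Hmo(k)\to\Hmo_0(k)$ as an \emph{additive} functor $\overline{L}$. On objects set $\overline{L}(\cA)=L(\cA)$, and send a class $[M]\in K_0\rep(\cA,\cB)$ to $L(h)^{-1}\circ L(\tilde M)\in\Hom_{\mathsf{D}}(L(\cA),L(\cB))$. This construction hinges on a single claim: for every exact triangle $X\to Y\to Z\to X[1]$ in $\rep(\cA,\cB)$ one has $L(\tilde Y)=L(\tilde X)+L(\tilde Z)$ in $\Hom_{\mathsf{D}}(L(\cA),L(\widehat{\cB}))$. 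Granting this, $M\mapsto L(h)^{-1}L(\tilde M)$ is additive on triangles and on direct sums, hence descends to a group homomorphism on $K_0\rep(\cA,\cB)$; compatibility with composition (the tensor product of bimodules corresponds, through the Morita equivalences $h$, to composition of the classifying dg functors), with identities, and with biproducts (the orthogonal decomposition $\cD_c(\cA\sqcup\cB)=\cD_c(\cA)\times\cD_c(\cB)$ together with condition~(ii) forces $L(\cA\sqcup\cB)\cong L(\cA)\oplus L(\cB)$) is then routine, and $\overline{L}\circ\cU=L$ holds on the nose since the graph bimodule of a dg functor $F\colon\cA\to\cB$ is classified by $h\circ F$.

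The main obstacle is precisely the additivity-on-triangles claim for $L_1$: condition~(ii) only records the split situation packaged by $T(\cA)$, and it must be promoted to arbitrary exact triangles. One route: an exact triangle $X\to Y\to Z$ in $\rep(\cA,\cB)$ amounts, via the morphism $X\to Y$, to a dg functor from $\cA$ into a dg category Morita equivalent to $T(\widehat{\cB})$ (the dg category of morphisms between perfect $\cB$-modules); feeding into condition~(ii) for $\widehat{\cB}$ the two inclusions together with the dg functor that reads off the middle term, and accounting for the shift that appears (under an additive invariant the suspension acts as $-\,\mathrm{id}$), expresses $L(\tilde Y)$ as $L(\tilde X)+L(\tilde Z)$. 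Equivalently, one invokes the reformulation of condition~(ii) in terms of semiorthogonal decompositions. Everything else in the argument is formal.
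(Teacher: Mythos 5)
This theorem is not proved in the paper: it is cited verbatim from Tabuada's \emph{Additive invariants of dg categories} \cite[Thms.~5.3 and 6.3]{IMRN}. So there is no in-paper proof to compare against, and your attempt should be judged as a reconstruction of the argument of \emph{loc.\,cit.}, which it is, in broad outline.

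The overall plan (well-definedness / full faithfulness / essential surjectivity, with essential surjectivity obtained by factoring a given additive invariant $L$ first through the localization $\Hmo(k)$ and then, as an additive functor, through the $K_0$-linearization $[-]\colon\Hmo(k)\to\Hmo_0(k)$) is sound and is the strategy followed in \cite{IMRN}. You also correctly identify the one non-formal step: condition (ii) only records the split situation packaged by $T(\cA)$, and one must promote it to additivity on arbitrary exact triangles before the assignment $[M]\mapsto L(h)^{-1}L(\tilde M)$ descends to $K_0\rep(\cA,\cB)$. That is indeed the crux, and the route via $T(\widehat{\cB})$ is the right one.

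However, the way you carry it out is imprecise enough that I would not accept it as written. The correct mechanism is this: a morphism $X\to Y$ in $\rep(\cA,\cB)$ corresponds to a morphism $\Psi\colon\cA\to T(\widehat{\cB})$ in $\Hmo(k)$, and one needs \emph{three} dg functors $s,t,c\colon T(\widehat{\cB})\to\widehat{\cB}$ (source, target, cone), satisfying $s\iota_1=0$, $s\iota_2=h$, $t\iota_1=t\iota_2=h$, $c\iota_1=h$, $c\iota_2=0$ (where $h$ is the Yoneda embedding). Condition (ii) applied to $\widehat{\cB}$ then forces $L(t)=L(s)+L(c)$ in $\Hom_{\mathsf{D}}(L(T(\widehat{\cB})),L(\widehat{\cB}))$, and post-composing with $L(\Psi)$ gives $L(\tilde Y)=L(\tilde X)+L(\tilde Z)$. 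Your sketch only invokes ``the dg functor that reads off the middle term'' and then ``accounts for the shift''; but no shift appears in this computation (the relation $L(\widetilde{W[1]})=-L(\tilde W)$ is a \emph{consequence} of triangle additivity, not an ingredient of its proof), and without $s$ and $c$ being set up explicitly the argument does not close. Your fallback --- ``invoke the reformulation of condition (ii) in terms of semi-orthogonal decompositions'' --- is essentially a citation of \cite[Thm.~6.3(4)]{IMRN}, which is part of the very statement being cited, so it cannot serve as an independent proof. Finally, the compatibility of $[M]\mapsto L(h)^{-1}L(\tilde M)$ with composition (i.e.\ with derived tensor product of bimodules) is dismissed as ``routine'', but it requires a coherent choice of the Morita equivalences $h$ and a genuine verification; this is several pages in \cite{IMRN}, and a blind proof should at least flag what has to be checked rather than wave it through.
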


Note that $\Hmo_0(k)$ is a symmetric monoidal category such that the
canonical functor $\Hmo(k)\to\Hmo_0(k)$ is strictly symmetric monoidal.
The preceding theorem has a monoidal version as follows.
We will say that a dg category $\cA$ is \emph{flat} if, for any
objects $x$ and $y$ of $\cA$, the dg $k$-module $\cA(x,y)$
is a flat $k$-module in each degree. We will write $\dgcat_f(k)$
for the full subcategory of $\dgcat(k)$ whose objects are the flat
dg categories.

\begin{definition}\label{def:tensor-additive2}
A \emph{symmetric monoidal additive invariant} is an additive invariant
$L:\dgcat(k)\to\mathsf{D}$ with values in a symmetric monoidal category $\mathsf{D}$
together with a structure of symmetric monoidal functor on the restriction of $L$
to the subcategory $\dgcat_f(k)$. Such an additive invaiant $L$ is said
to be \emph{strongly symmetric monoidal when restricted to
smooth and proper dg categories} if its restriction to the
full subcategory of $\dgcat_f(k)$ which consists of flat smooth and
proper dg categories is strongly monoidal in the sense of MacLane \cite{ML}.
\end{definition}

\begin{proposition}\label{prop:universalmonoidal}
Let $L:\dgcat(k)\to\mathsf{D}$ be a symmetric monoidal additive invariant.
Then, the corresponding additive functor $\underline{L}:\Hmo_0(k)\to\mathsf{D}$
has a canonical structure of symmetric monoidal functor.
In particular, if $L$ is moreover strongly symmetric monoidal when restricted to
smooth and proper dg categories, then the restriction of
$\underline{L}$ to the category $\NChow(k)$ is strongly monoidal.
\end{proposition}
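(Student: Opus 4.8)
The plan is to upgrade the additive functor $\underline{L}\colon\Hmo_0(k)\to\mathsf{D}$ furnished by Theorem~\ref{thm:universal} — the unique additive functor with $\underline{L}\circ\cU=L$ — to a symmetric monoidal functor, by transporting the monoidal structure carried by the restriction $L|_{\dgcat_f(k)}$ along flat resolutions, and then invoking the universal property a second time to deal with naturality. First I would record a few elementary facts: $\cU$, and hence $\underline{L}$, is the identity on objects, so that $\underline{L}(\cA)=L(\cA)$ for every dg category $\cA$; on the subcategory $\dgcat_f(k)$ the derived tensor product $-\otimes^{\bbL}-$ coincides with the ordinary tensor product $-\otimes-$; and every dg category admits a functorial Morita equivalence $Q\cA\to\cA$ from a flat dg category. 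Since additive invariants invert Morita equivalences, $\underline{L}(Q\cA)\to\underline{L}(\cA)$ is invertible, and since $Q\cA\otimes Q\cB$ represents $\cA\otimes^{\bbL}\cB$ in $\Hmo(k)$, so is $\underline{L}(Q\cA\otimes Q\cB)\to\underline{L}(\cA\otimes^{\bbL}\cB)$.

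Next I would write down the structure maps. For flat $\cA$ and $\cB$, the symmetric monoidal structure of $L|_{\dgcat_f(k)}$ together with the identification $\cA\otimes^{\bbL}\cB\simeq\cA\otimes\cB$ gives a natural arrow
$$\underline{L}(\cA)\otimes\underline{L}(\cB)=L(\cA)\otimes L(\cB)\too L(\cA\otimes\cB)=\underline{L}(\cA\otimes^{\bbL}\cB)\,,$$
together with a unit morphism (the unit object of $\Hmo_0(k)$ lies in $\dgcat_f(k)$). For arbitrary $\cA$ and $\cB$ I would define
$$\nu_{\cA,\cB}\colon L(\cA)\otimes L(\cB)\isotoo L(Q\cA)\otimes L(Q\cB)\too L(Q\cA\otimes Q\cB)\isotoo\underline{L}(\cA\otimes^{\bbL}\cB)\,,$$
which is natural in $\cA,\cB\in\dgcat(k)$ because $Q$ is a functor and the flat structure maps are natural; the coherence axioms for $\nu$ reduce, along $Q$, to the flat case, where they express that $L|_{\dgcat_f(k)}$ is a symmetric monoidal functor.

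It then remains to descend $\nu$ to a natural transformation $\mu\colon\underline{L}(-)\otimes\underline{L}(-)\Rightarrow\underline{L}(-\otimes^{\bbL}-)$ of functors on $\Hmo_0(k)\times\Hmo_0(k)$. Iterating Theorem~\ref{thm:universal} in each variable shows that $(\cU\times\cU)^{\ast}$ is an equivalence between functors $\Hmo_0(k)\times\Hmo_0(k)\to\mathsf{D}$ additive in each variable and functors $\dgcat(k)\times\dgcat(k)\to\mathsf{D}$ that are additive invariants in each variable separately. Both $\underline{L}(-)\otimes\underline{L}(-)$ and $\underline{L}(-\otimes^{\bbL}-)$ belong to the former category, their pullbacks along $\cU\times\cU$ are $L(-)\otimes L(-)$ and $\underline{L}(-\otimes^{\bbL}-)$, and $\nu$ is a morphism between these pullbacks; by full faithfulness there is a unique $\mu$ with $(\cU\times\cU)^{\ast}\mu=\nu$, and $\mu$ inherits the coherence identities by the same argument. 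Together with the unit morphism, $\mu$ is the desired symmetric monoidal structure on $\underline{L}$. If $L$ is moreover strongly monoidal on flat smooth and proper dg categories, then for smooth and proper $\cA,\cB$ the dg categories $Q\cA$, $Q\cB$, and $Q\cA\otimes Q\cB$ are again smooth and proper (both properties being Morita invariant and stable under tensor product), so every arrow in the definition of $\nu_{\cA,\cB}$, hence $\mu_{\cA,\cB}$, is invertible; as $\NChow(k)$ is the idempotent completion of the full subcategory of $\Hmo_0(k)$ on smooth and proper dg categories, and a structure map on a retract is a retract of an isomorphism, $\underline{L}|_{\NChow(k)}$ is strongly monoidal.

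I expect the descent in the last paragraph to be the main obstacle: morphisms in $\Hmo_0(k)$ are $K_0$-classes of bimodules rather than dg functors, so naturality of the structure maps over $\Hmo_0(k)$ cannot be checked by hand and has to be extracted from the bifunctorial form of the universal property. Setting up that iterated-currying argument carefully, and verifying that $Q\cA\otimes Q\cB$ genuinely computes the derived tensor product (so that the last arrow in $\nu_{\cA,\cB}$ makes sense and is natural), are the two technical points requiring attention; the rest is formal bookkeeping.
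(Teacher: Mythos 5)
Your proposal is correct and follows essentially the same route as the paper: the paper establishes the equivalences $\Fun^n_{\mathsf{add}}(\Hmo_0(k),\mathsf{D})\simeq\Fun^n_{\mathsf{A}}(\dgcat(k),\mathsf{D})\simeq\Fun^n_{\mathsf{A}}(\dgcat_f(k),\mathsf{D})$ by iterated currying of Theorem~\ref{thm:universal} together with the observation that cofibrant (hence flat) dg categories are Morita-dense, and then declares the transport of the monoidal data automatic. Your explicit construction of the structure maps via functorial flat resolutions $Q\cA\to\cA$, followed by descent along the bifunctorial universal property and a coherence check pulled back to the flat case, is precisely an unwinding of that ``automatic'' step.
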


\begin{proof}
Let $n\geq 1$ be an integer and $\mathsf{D}$ an additive category.
We claim that the composing with the functor $\cU^n:\dgcat(k)^n\to\Hmo_0(k)$
induces an equivalence of categories
\begin{equation}\label{eq:multiinduced}
\Fun^n_{\mathsf{add}}(\Hmo_0(k),\mathsf{D}) \stackrel{\sim}{\too}
\Fun^n_{\mathsf{A}}(\dgcat(k),\mathsf{D})\, ,
\end{equation}
where the left-hand-side denotes the category of functors $\Hmo_0(k)^n\to\mathsf{D}$ which
are additive in each variable, and the right-hand-side the category of functors $\dgcat(k)^n\to\mathsf{D}$ which are additive invariants
in each variable.
We claim also that the inclusion $\dgcat_f(k)\subset\dgcat$ induces equivalences of categories
\begin{equation}\label{eq:multiinduced2}
\Fun^n_{\mathsf{A}}(\dgcat(k),\mathsf{D}) \stackrel{\sim}{\too}
\Fun^n_{\mathsf{A}}(\dgcat_f(k),\mathsf{D})\, ,
\end{equation}
where $\Fun^n_{\mathsf{A}}(\dgcat_f(k),\mathsf{D})$ denotes the category of
functors $\dgcat_f(k)^n\to\mathsf{D}$ which satisfy conditions (i)-(ii)
of Definition \ref{def:tensor-additive} (this makes sense because $T(\cA)$ is flat
whenever $\cA$ is flat). Note that the proof of Proposition~\ref{prop:universalmonoidal} follows automatically from the equivalences \eqref{eq:multiinduced}-\eqref{eq:multiinduced2}. The equivalences \eqref{eq:multiinduced2}
follow immediately from the compatibility of the functor $T$ with Morita equivalences
and from the fact that the inclusion functor
$\dgcat_f(k)\subset\dgcat(k)$ induces an equivalence after localization by Morita
equivalences since every cofibrant dg category in the sense of the
model category structures of \cite{CRAS,IMRN} is flat. In order to finish the proof it thus remains only to check that \eqref{eq:multiinduced} are equivalences of categories. We proceed by induction on $n$. The case where $n=1$ is provided by Theorem \ref{thm:universal}.
If $n>1$, then we have the following obvious equivalences of categories.
$$\begin{aligned}
\Fun^n_{\mathsf{add}}(\Hmo_0(k),\mathsf{D})
&\simeq\Fun_{\mathsf{add}}(\Hmo_0(k),\Fun^{n-1}_{\mathsf{add}}(\Hmo_0(k),\mathsf{D}))\\
\Fun^n_{\mathsf{A}}(\dgcat(k),\mathsf{D})
&\simeq\Fun_{\mathsf{A}}(\dgcat(k),\Fun^{n-1}_{\mathsf{A}}(\dgcat(k),\mathsf{D}))\, .
\end{aligned}$$
On the other hand, by induction (and by applying again Theorem \ref{thm:universal})
we have
$$\begin{aligned}
\Fun_{\mathsf{add}}(\Hmo_0(k),\Fun^{n-1}_{\mathsf{add}}(\Hmo_0(k),\mathsf{D}))
&\simeq\Fun_{\mathsf{A}}(\dgcat(k),\Fun^{n-1}_{\mathsf{add}}(\Hmo_0(k),\mathsf{D}))\\
&\simeq\Fun_{\mathsf{A}}(\dgcat(k),\Fun^{n-1}_{\mathsf{A}}(\dgcat(k),\mathsf{D}))\,.
\end{aligned}$$
This achieves the proof.
\end{proof}
\begin{remark}
Note that the proof of the Proposition~\ref{prop:universalmonoidal} is also the proof
of the existence of the canonical symmetric monoidal structure
on $\Hmo_0(k)$.
\end{remark}
Let us now describe some examples of additive invariants which are moreover
symmetric monoidal.
\begin{example}[Hoschchild homology]\label{example:HH}
Let $\cD(k)$ be the derived category of the base commutative ring $k$. By construction this category is additive and symmetric monoidal. As explained in \cite[\S6.1]{IMRN}, the Hochschild homology functor
\begin{equation*}
\HH: \dgcat(k) \too \cD(k)
\end{equation*}
is an additive invariant. Thanks to \cite[Example~7.9]{CT1},
$\HH$ is furthermore symmetric monoidal and hence defines a
symmetric monoidal functor
\begin{equation*}
\HH: \Hmo_0(k) \too \cD(k)\, .
\end{equation*}
In particular, for any smooth and proper dg category $\cA$ (= rigid object of $\Hmo_0$ with dual $\cA^\op$), $\HH(\cA)$ is a rigid object of $\cD(k)$ (\ie a perfect complex) and the pairing considered in Theorem \ref{thm:HRR}
is non degenerate.

For a $k$-scheme $X$, we will write
$\HH(X)=\HH(\perf(X))$. The restriction of the functor $\HH$
to the category $\mathbf{P}(k)$
gives the usual Hochschild homology of smooth and proper $k$-schemes, whatever
version the reader might prefer (the agreement with Weibel's version of Hochschild
homology of schemes (with coefficients) is proved by Keller in \cite{Ringed},
while, as explained in~\cite[\S4.2]{Cald}, Weibel's definition is isomorphic to
Caldararu-Willerton's version of Hochschild homology).
\end{example}
\begin{example}[Mixed complexes]\label{example:mix}
Following Kassel \cite[\S1]{Kassel}, a {\em mixed complex} $(N,b,B)$ is a $\bbZ$-graded $k$-module $\{N_n\}_{n \in \bbZ}$ endowed with a degree $+1$ endomorphism $b$ and a degree $-1$ endomorphism $B$ satisfying the relations $b^2=B^2=Bb +bB=0$. Equivalently, a mixed complex is a right dg module over the dg algebra $\Lambda:=k[\epsilon]/\epsilon^2$, where $\epsilon$ is of degree $-1$ and $d(\epsilon)=0$. Let $\cD(\Lambda)$ be the derived category of mixed complexes. By construction, $\cD(\Lambda)$ is additive and as explained in \cite[\S6.1]{IMRN} the mixed complex functor
\begin{equation}\label{eq:C}
C: \dgcat(k) \too \cD(\Lambda)
\end{equation}
is an additive invariant. Moreover, $\cD(\Lambda)$ carries a symmetric monoidal structure defined on the underlying dg $k$-modules under which $C$ is symmetric monoidal; see \cite[Example~7.10]{CT1}.
\end{example}
\begin{example}[Periodic cyclic homology]\label{Example:HP}
Assume that $k$ is a field. Using the work of Kassel~\cite{Kassel},
one can understand periodic cyclic homology as follows (we refer
to \cite[Examples 8.11 and 9.11]{CT1} for a slightly more detailed exposition).
Let $k[u]$ be the cocommutative Hopf algebra
of polynomials in one variable $u$ in degree $2$. To a mixed complex
(that is a $\Lambda$-module) $M$, one associates its periodization $P(M)$. This is
the $k[u]$-comodule whose underlying complex is $k\otimes^\bbL_\Lambda M$ and whose
comultiplication by $u$ is induced by Connes' map
$$S:k\otimes^\bbL_\Lambda M[-2]\too k\otimes^\bbL_\Lambda M\, .$$
If $\cD(k[u]\text{-}\mathrm{Comod})$ denotes the homotopy category of
$k[u]$-comodules, endowed with its symmetric tensor product induced
by the cotensor product of comodules, this defines
a symmetric monoidal triangulated functor
\begin{equation*}
P: \cD(\Lambda)\too\cD(k[u]\text{-}\mathrm{Comod})\, .
\end{equation*}
The unit object of $\cD(k[u]\text{-}\mathrm{Comod})$ is $k[u]$ and for any
dg category $\cA$ we have a canonical isomorphism in the derived
category of $k$-modules
\begin{equation*}
\HP(\cA)\simeq\bbR\Hom(k[u],P(C(\cA)))\, ,
\end{equation*}
where $\HP(\cA)$ is the periodic cyclic homology complex of $\cA$.
In particular, we have a natural isomorphism of $k$-vector spaces
\begin{equation*}
\HP_n(\cA)\simeq\Hom_{\cD(k[u]\text{-}\mathrm{Comod})}(k[u],P(C(\cA)[-n]))\,.
\end{equation*}
Note that for any $k[u]$-comodule $M$ we have
$$\Hom_{\cD(k[u]\text{-}\mathrm{Comod})}(k[u],M[n])=
\begin{cases}
\Hom_{\cD(k[u]\text{-}\mathrm{Comod})}(k[u],M)&\text{if $n$ is even,}\\
\Hom_{\cD(k[u]\text{-}\mathrm{Comod})}(k[u],M[1])&\text{if $n$ is odd.}
\end{cases}$$
We thus have a canonical symmetric monoidal functor
from $\cD(k[u]\text{-}\mathrm{Comod})$ to $\mathrm{Vect}_{\bbZ/2}(k)$. By pre-composing it with the mixed complex functor $C$ we thus get
a symmetric monoidal functor
\begin{equation}\label{eq:P4}
\HP_\ast:\dgcat(k)\too \mathrm{Vect}_{\bbZ/2}(k)\,.
\end{equation}
Thanks to Kassel's work, \eqref{eq:P4} becomes
strongly symmetric monoidal when restricted to smooth and proper dg categories;
see~\cite[Theorem~7.2]{Galois}.
%
\end{example}
\section{Perioditization of classical Chow motives}\label{sec:periodizationChow}
Let $\cC$ be an additive symmetric monoidal category $\cC$ endowed with a $\otimes$-invertible
object $L$ (\ie such that the functor $X\mapsto L\otimes X$ is an equivalence of categories),
such that the twist $\tau : L\otimes L\to L\otimes L$ is the identity. Out of this data we define the category $\cC^\per$ as follows. The objects are the same as those of $\cC$ and the maps are given by the formula
\begin{equation*}
\Hom_{\cC^\per}(X,Y)=\bigoplus_{n\in\mathbf{Z}}\Hom_\cC(X,L^{\otimes n}\otimes Y)\, .
\end{equation*}
The composition of two maps $u:X\to L^{\otimes m}\otimes Y$ and $v:Y\to L^{\otimes n}\otimes Z$ of $\cC^\per$ is by definition
$$X\xrightarrow{ \ u \ }L^{\otimes m}\otimes Y\xrightarrow{L^{\otimes n}\otimes v}
L^{\otimes m}\otimes L^{\otimes n}\otimes Z
\simeq L^{\otimes m+n}\otimes Z\,.$$
Note that we have a canonical isomorphism $\unit\simeq L$ in $\cC^\per$ given by the
identity of the unit object $\unit$ in $\cC$. The category $\cC^\per$ has a symmetric
monoidal structure (this is where we use the assumption that the twist $\tau$ is the identity) which is uniquely determined by the fact that
we have a strict symmetric monoidal functor
\begin{equation}\label{eq:mapsper2}
c:\cC\too\cC^\per
\end{equation}
defined as the identity on objects and by the canonical inclusions
$$\Hom_\cC(X,Y)=\Hom_\cC(X,L^{\otimes 0}\otimes Y)\subset
\bigoplus_{n\in\mathbf{Z}}\Hom_\cC(X,L^{\otimes n}\otimes Y)=\Hom_{\cC^\per}(X,Y)\,.$$
It is easy to check that the functor \eqref{eq:mapsper2} has the following universal property:
given any symmetric monoidal category $\cD$, composing with $c$ induces
an equivalence of categories between the category of strong symmetric
monoidal functors from $\cC^\per$ to $\cD$ and the category of pairs $(F,i)$,
where $F:\cC\to\cD$ is a strong symmetric monoidal functor and $i:\unit\stackrel{\sim}{\to} F(L)$
is an isomorphism.
\begin{example}\label{example:periodisationofgradedvectorspaces}
In the case where $\cC=\mathrm{Vect}_\mathbf{Z}(k)$ is the category of $\mathbf{Z}$-graded vector spaces over some field $k$, and $L$ is a graded
vector space of rank $1$ concentrated in degree $2$, $\cC^\per$ is (canonically equivalent to) the category $\mathrm{Vect}_{\mathbf{Z}/2}(k)$
of super vector spaces (once we choose a generator of $L$).
\end{example}
Given a field $k$, let $\Chow(k)_\mathbf{Q}$ be the $\mathbf{Q}$-linear
category of \emph{Chow motives}; see \cite[Chapter~4]{andre}, \cite[Chapter~16]{Fulton} or \cite{Manin} for instance).
This category is defined as the idempotent completion of the
category whose objects are the pairs $(X,m)$, with $X$ a smooth and proper
$k$-scheme, and whose morphisms are given by
$$\Hom_{\Chow(k)_\bbQ}((X,m),(Y,n))=\prod_{i\in I}\CH^{d_i-m+n}(X_i\times Y)\,.$$
Here, $(X_i)_{i\in I}$, is the family of connected components of $X$, $d_i$ the dimension of $X_i$, and $\CH^c(W)_\bbQ$ the $\bbQ$-vector
space of algebraic cycles of codimension $c$ in $W$ modulo rational
equivalence\footnote{The category of Chow motives is often made out of
smooth and projective $k$-schemes, but this gives equivalent categories, at least
in the case where the ground field $k$ is perfect. In general, the theory of weights
tells us that the good theory of Chow motives should be constructed out of proper
and regular $k$-schemes (as opposed to proper and smooth ones),
but we won't go in this direction here.}.
The composition law is the usual composition of algebraic correspondences.
We will denote by $\cP(k)$ the category of smooth and proper $k$-schemes.
We have a natural functor
\begin{equation}\label{eq:chowmotiveofschemes}
M:\cP(k)^\op\too\Chow(k)_\bbQ \quad X\longmapsto M(X)=(X,0)
\end{equation}
which sends a map $f:X\to Y$ to the cycle of its graph.
There is a unique symmetric monoidal structure on $\Chow_\bbQ(k)$
such that the functor $M$ is strictly symmetric monoidal with respect to the
cartesian product in $\cP(k)$.
Let us write $L$ for the Lefschetz
motive (so that the motive $M(\bbP^1)$ is canonically
isomorphic to $\mathbf{Q}\oplus L$, where $\mathbf{Q}$ stands for the
motive of $\mathrm{Spec}(k)$). For any smooth and proper $k$-scheme $X$
and any integer $m$, we have a natural isomorphism $(X,m)\otimes L\simeq (X,m+1)$; this is a reformulation of the projective bundle formula.
We have also a universal perioditization functor with respect to $L$
\begin{equation}\label{eq:mapsper3}
\Chow(k)_\mathbf{Q}\too\Chow(k)^\per_\mathbf{Q}
\end{equation}
which is the identity on objects. Since this functor is symmetric
monoidal and every object of $\Chow(k)$ is rigid, all the objects of
$\Chow(k)_\mathbf{Q}^\per$ are rigid as well.

Recall that a \emph{Weil cohomology} is an additive
strong symmetric monoidal functor
\begin{equation}\label{eq:weilcoh}
H^\ast:\Chow(k)_\mathbf{Q}\too \mathrm{Vect}_\mathbf{Z}(K)
\end{equation}
such that $H^\ast(L)$ is concentrated in degree $2$
(for some field of characteristic zero $K$); see \cite[Proposition 4.2.5.1]{andre}.
Note that this implies that the vector space $H^\ast(L)$ is of dimension $1$.
In particular, by considering the perioditization of $\mathrm{Vect}_\mathbf{Z}(K)$
with respect to $H^\ast(L)$, we get a canonical commutative diagram
of strong symmetric monoidal functors
\begin{equation}\label{eq:periodizeWeilcoh}
\begin{split}
\xymatrix{
\Chow(k)_\mathbf{Q}\ar[r]^{H^\ast}\ar[d]&\mathrm{Vect}_\mathbf{Z}(K)\ar[d]\\
\Chow(k)^\per_\mathbf{Q} \ar[r]^{H^\per}&\mathrm{Vect}_{\mathbf{Z}/2}(K)
}
\end{split}
\end{equation}
in which $H^\per$ consists of the even and the odd part of $H^\ast$, \ie
$H^\per(X)=(H^\mathrm{even}(X),H^\mathrm{odd}(X))$ with
$$H^\mathrm{even}(X)=\bigoplus_{\text{$n$ even}}H^n(X)\quad \text{and} \quad
H^\mathrm{odd}(X)=\bigoplus_{\text{$n$ odd}}H^n(X) \, .$$
The known examples of Weil cohomologies include \'etale $\ell$-adic cohomology for any
prime number $\ell$ distinct from the characteristic of $k$,
crystalline cohomology (if $\mathit{char}(k)>0$), algebraic de Rham
cohomology (if $\mathit{char}(k)=0$), and Betti (or singular) cohomology
(if $k\subset\mathbf{C}$).

The category $\Chow(k)^\per_\mathbf{Q}$ is strongly related with the category
$\Hmo_0(k)$ as follows. Let $\Hmo_{0,\mathbf{Q}}(k)$ be the idempotent
completion of the universal $\mathbf{Q}$-linear additive category obtained from $\Hmo_0(k)$.
In other words, $\Hmo_{0,\mathbf{Q}}(k)$ is the idempotent completion of the
category $\Hmo_{0}(k)\otimes\mathbf{Q}$ whose objects are those of $\Hmo_0(k)$
and whose groups of morphisms are given by the formula
$$\Hom_{\Hmo_0(k)\otimes\mathbf{Q}}(\cA,\cB)=\Hom_{\Hmo_0(k)}(\cA,\cB)\otimes\mathbf{Q}\, .$$

Given a smooth and proper morphism $X\to S$, let us denote by $\todd_{X/S}$
(or simply by $\todd_X$ if the base $S$ is fixed) the
Todd class of the tangent bundle of $X/S$.
Given a commutative $\bbQ$-algebra $A$ and a power series of the form
$$\varphi=1+\sum_{n\geq 1}a_n X^n\in A[[X]]\, ,$$
its square root $\sqrt{\varphi}$ is defined as $\exp\Big(\frac{1}{2}\log(\varphi)\Big)$. In particular, this defines the square root of a Todd class.

A categorical reformulation of the Grothendieck-Riemann-Roch theorem
for proper morphisms between smooth and proper $k$-schemes
(using the explicit description of the category $\mathbf{P}(k)$
given at the end of Section \ref{sec:category-kernels}) is the following.
\begin{theorem}\label{thm:categoricalGRR}
There is a unique strong symmetric monoidal fully faithful functor
\begin{equation}\label{eq:embedChowinNChow}
\iota:\Chow(k)^\per_\mathbf{Q}\too \Hmo_{0,\mathbf{Q}}(k)
\end{equation}
with the following three properties:
\begin{itemize}
\item[(i)] it sends the motive $M(X)$ of a smooth and proper $k$-scheme $X$ to
the dg category $\perf(X)$;
\item[(ii)] for any two smooth and projective $k$-schemes $X$ and $Y$, the
monoidal constraint
$$\iota(M(X))\otimes\iota(M(Y))=\perf(X)\otimes\perf(Y)\too\perf(X\times Y)=\iota(M(X)\otimes M(Y))$$
is the map induced by the canonical isomorphism \eqref{eq:isom00};
\item[(iii)] on maps between motives of smooth and projective $k$-schemes, the above functor is given by the inverse of the isomorphism
$$\chern\cdot\sqtodd_{X\times Y}: K_0(X\times Y)\otimes\mathbf{Q}
\xrightarrow{ \sim }\bigoplus_{n\in\mathbf{Z}}\CH^n(X\times Y)_\mathbf{Q}\,.$$
\end{itemize}
\end{theorem}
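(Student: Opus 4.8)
The plan is to construct $\iota$ first on the full symmetric monoidal subcategory $\cC\subset\Chow(k)^\per_\bbQ$ spanned by the motives $M(X)$ of smooth and projective $k$-schemes, and then to extend it to the idempotent completion. On objects, property (i) forces $\iota(M(X))=\perf(X)$. Unwinding the periodization construction together with the projective bundle formula yields a natural identification
\[
\Hom_{\Chow(k)^\per_\bbQ}(M(X),M(Y))=\bigoplus_{m\in\bbZ}\Hom_{\Chow(k)_\bbQ}(M(X),(Y,m))=\bigoplus_{n\in\bbZ}\CH^n(X\times Y)_\bbQ\,,
\]
while on the other side $\Hom_{\Hmo_{0,\bbQ}(k)}(\perf(X),\perf(Y))=K_0(\perf(X)^\op\otimes^\bbL\perf(Y))_\bbQ=K_0(X\times Y)_\bbQ$, using that $\perf(X)$ is smooth and proper together with To\"en's identification \eqref{eq:isom00}. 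Because $X\times Y$ is regular, the Chern character $\chern\colon K_0(X\times Y)_\bbQ\isoto\bigoplus_n\CH^n(X\times Y)_\bbQ$ is an isomorphism, hence so is its twist by the invertible class $\sqtodd_{X\times Y}$; I would define $\iota$ on these Hom-groups to be the inverse of $\chern\cdot\sqtodd_{X\times Y}$, which is precisely condition (iii) and which makes $\iota|_\cC$ bijective on Hom-groups by construction.

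The real content is that this assignment is a functor, i.e.\ compatible with composition and with identities, and this is exactly the Grothendieck--Riemann--Roch theorem. Composition of correspondences $\alpha\in\CH(X\times Y)$, $\beta\in\CH(Y\times Z)$ is $(\pi_{XZ})_\ast(\pi_{XY}^\ast\alpha\cdot\pi_{YZ}^\ast\beta)$, and composition of integral kernels is its $K$-theoretic analogue $[(\pi_{XZ})_!(\pi_{XY}^\ast\cF\cdot\pi_{YZ}^\ast\cG)]$, where $\pi_{XZ}\colon X\times Y\times Z\to X\times Z$ has relative tangent bundle the pullback of $T_Y$ along the projection $\mathrm{pr}_Y$ to $Y$. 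Feeding this into GRR for $\pi_{XZ}$, together with the compatibility of $\chern$ with pullbacks and the projection formula, the whole matching reduces to the identity
\[
\pi_{XY}^\ast\sqtodd_{X\times Y}\cdot\pi_{YZ}^\ast\sqtodd_{Y\times Z}=\pi_{XZ}^\ast\sqtodd_{X\times Z}\cdot\mathrm{pr}_Y^\ast\todd_Y
\]
on $X\times Y\times Z$, which follows from the multiplicativity of the Todd class under products and from $\sqrt{\varphi\psi}=\sqrt{\varphi}\sqrt{\psi}$. The point is that the correction factor $\mathrm{pr}_Y^\ast\todd_Y$ produced by GRR is exactly absorbed by the $\sqtodd$-normalization, giving $\iota(\beta)\circ\iota(\alpha)=\iota(\beta\circ\alpha)$. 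Compatibility with identities is the same computation for the diagonal immersion $\Delta\colon X\hookrightarrow X\times X$: GRR yields $\chern(\Delta_\ast\cO_X)\cdot\todd_{X\times X}=\Delta_\ast(\todd_X)$, and since $\Delta^\ast\sqtodd_{X\times X}=\todd_X$ the projection formula rewrites the right-hand side as $\sqtodd_{X\times X}\cdot[\Delta_X]$, so $\iota([\Delta_X])=[\cO_{\Delta_X}]$, the identity of $\perf(X)$.

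Next I would upgrade $\iota|_\cC$ to a symmetric monoidal functor: on objects $M(X)\otimes M(Y)=M(X\times Y)\mapsto\perf(X\times Y)\simeq\perf(X)\otimes^\bbL\perf(Y)$, the monoidal constraint being the one prescribed by (ii); on morphisms one checks $\iota(\alpha\boxtimes\beta)=\iota(\alpha)\boxtimes\iota(\beta)$ from $\chern(\xi\boxtimes\eta)=\chern(\xi)\times\chern(\eta)$ and the multiplicativity of $\todd$ (hence of its square root) under external products, while compatibility with the symmetry constraints is immediate because on both sides the braiding permutes factors. Finally, since $\cP(k)$ is closed under disjoint union one has $M(X)\oplus M(Y)=M(X\sqcup Y)$, and in $\Chow(k)^\per_\bbQ$ every object $(X,m)$ is isomorphic to $M(X)$; hence $\Chow(k)^\per_\bbQ$ is the idempotent completion of $\cC$, and the fully faithful strong symmetric monoidal functor $\iota|_\cC$, landing in the idempotent-complete additive category $\Hmo_{0,\bbQ}(k)$, extends essentially uniquely to a fully faithful strong symmetric monoidal functor on all of $\Chow(k)^\per_\bbQ$. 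Uniqueness follows by running the same observation backwards: conditions (i)--(iii) determine $\iota$ together with its monoidal constraints on $\cC$, hence on its idempotent completion.

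The hard part is the bookkeeping compressed into the second paragraph: one has to juggle the three projections out of $X\times Y\times Z$, apply the projection formula in the right order, and nail down the precise square-root identity that makes all Todd classes telescope --- this being exactly why one inserts $\sqtodd$ rather than $\todd$ in (iii). A secondary subtlety is that (ii)--(iii) are phrased for smooth \emph{projective} schemes, so in order to reach all of $\cP(k)$ one must invoke that, after resolution of singularities (or de Jong alterations), the motive of any smooth proper $k$-scheme is a direct summand of the motive of a smooth projective one.
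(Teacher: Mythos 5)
Your proposal is correct and follows essentially the same route as the paper: define $\iota$ on the subcategory spanned by motives of smooth proper schemes via the inverse of $\chern\cdot\sqtodd$, verify functoriality via Grothendieck--Riemann--Roch together with a Todd-class multiplicativity (telescoping) computation, then extend through idempotent completion. The paper organizes the GRR bookkeeping by a commutative diagram over a base $S$ that is then specialized, rather than by writing out the telescoping identity explicitly, and it constructs the functor directly on all smooth and proper $k$-schemes (GRR applies to proper morphisms between them), relegating the proper-versus-projective question to a footnote about perfect base fields rather than invoking resolution of singularities or de Jong alterations as you suggest at the end.
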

\begin{proof}
Given a smooth and proper $k$-scheme $X$, let us write
$$\CH^\ast(X)=\bigoplus_{n\in\bbZ}\CH^n(X)_\bbQ=\prod_{n\in\bbZ}\CH_\bbQ^n(X)\, .$$
Consider now a smooth and proper $k$-scheme $S$. Given two smooth and proper $S$-schemes $X$ and $Y$, let us denote by
$p:X\times_S Y\to X$ and $q:X\times_S Y\to Y$ the canonical projections.
For any $\cE\in K_0(X\times_S Y)$
we then have the following commutative diagram of abelian groups
(in which $\todd_q=p^\ast(\todd_X)$
denotes the Todd class of the tangent bundle of $q$) 
\begin{equation}\label{eq:proofcatGRR1}\begin{split}\xymatrix{
K_0(X)\ar[r]^(.45){p^\ast}\ar[d]^{\chern\cdot\sqtodd_X}
&K_0(X\times_S Y)\ar[r]^{\cdot\cE}\ar[d]^{\chern\cdot p^\ast(\sqtodd_{X})}
&K_0(X\times_S Y)\ar[r]^(.55){q_\ast}\ar[d]^{\chern\cdot\todd_q\cdot q^\ast(\sqtodd_{Y})}
&K_0(Y)\ar[d]^{\chern\cdot\sqtodd_Y}\\
\CH^\ast(X)\ar[r]_(.45){p^\ast}
&\CH^\ast(X\times_S Y)\ar[r]_{\cdot\chern(\cE)\cdot \sqtodd_{X\times_S Y}}
&\CH^\ast(X\times_S Y)\ar[r]_(.55){q_\ast}
&\CH^\ast(Y)
}\end{split}\end{equation}
The commutavity of the left-hand and middle squares is obvious, while
the commutativity of the right-hand-side square stems from the Grothendieck-Riemann-Roch
theorem (see \cite[Theorem 15.2]{Fulton}, for instance),
and from the projection formula $q_\ast(a)\cdot b=q_\ast(a\cdot q^\ast(b))$.

Now, let $\cC$ denote the category whose objects are the smooth and proper $k$-schemes, whose morphisms are defined by the formula
$$\Hom_\cC(X,Y)=\CH^\ast(X\times Y)\,,$$
and whose composition law is defined through the usual formulas
for correspondences. The above arguments imply that there is a unique functor
\begin{equation}\label{eq:proofcatGRR2}
\iota:\cC\too\Hmo_{0,\bbQ}(k) \quad X\longmapsto\perf(X)
\end{equation}
which is defined on maps by the inverse of the isomorphisms $\chern\cdot\sqtodd_{X\times Y}$. 
Indeed, the compatibility with the composition of maps has already been checked
above in disguise: let $X$ and $Y$ be smooth and proper $k$-schemes and $\cE\in K_0(X\times Y)$. If $\pi:X\times Y\times Z\to X\times Y$ denotes the canonical projection,
one just has to put $S=Z$ and to replace $X$ by $X\times Z$, $Y$ by $Y\times Z$ and $\cE$ by $\pi^\ast(\cE)$ in the commutative diagram \eqref{eq:proofcatGRR1}. The proof of this claim follows from the description of the composition law
of maps between objects of the form $\perf(X)$ in $\Hmo_{0,\bbQ}(k)$, from the
explicit description of the category of integral kernels (see Section~\ref{sec:category-kernels}),
and from the Yoneda lemma. It is clear that the functor \eqref{eq:proofcatGRR2}
is strongly symmetric monoidal, fully faithful, and uniquely determined by
the isomorphisms $\chern\cdot\sqtodd_{X\times Y}$.
Since one can describe $\Chow^\per_\bbQ(k)$ as the idempotent completion of $\cC$
and since $\Hmo_{0,\bbQ}(k)$ is idempotent complete, the proof of the theorem now follows immediately from pure ``abstract nonsense''.
\end{proof}

Let $\bbP_{0,\bbQ}(k)$ be the full subcategory of $\Hmo_{0,\bbQ}(k)$
whose objects are those isomorphic to a direct factor of a dg category
of the form $\perf(X)$, with $X$ a proper and smooth $k$-scheme.
Theorem~\ref{thm:categoricalGRR} might be equivalently formulated as an explicit
equivalence of symmetric monoidal categories from $\Chow^\per_\bbQ(k)$
to $\bbP_{0,\bbQ}(k)$. In particular, for any Weil cohomology $H^\ast$, its
perioditization $H^\per$ may as well be seen as a symmetric monoidal additive
functor
\begin{equation}\label{eq:defFMChowper0}
H^\per:\bbP_{0,\bbQ}(k)\to\mathrm{Vect}_{\bbZ/2}(K)\, .
\end{equation}
Moreover, the explicit description of the category $\bbP(k)$
of integral kernels (Section \ref{sec:category-kernels})
implies that the category $\bbP_{0,\bbQ}(k)$
is canonically equivalent to the idempotent completion of the
category whose objects are smooth and proper $k$-schemes, and whose maps are given by the groups $K_0(X\times Y)\otimes\bbQ$
(with the usual composition law for correspondences, using pullback
and pushforward maps in $K$-theory).

The explicit description of the functor \eqref{eq:defFMChowper0} is the following. For any two proper and smooth $k$-schemes $X$ and $Y$,
given an element $\cE\in K_0(X\times Y)$, we will write
\begin{equation}\label{eq:defFMChowper1}
\Phi_\cE=\chern(\cE)\cdot\sqtodd_{X\times Y}\in\bigoplus_{n\in\bbZ}\CH^n(X\times Y)_\bbQ\, .
\end{equation}
Given a Weil cohomology $H^\ast$ as above, as its perioditization $H^\per$
defines a functor from $\Chow^\per_\bbQ(k)$ to the category of super vector spaces,
we obtain a super map
\begin{equation}\label{eq:defFMChowper2}
H^\per(\Phi_\cE):H^\per(X)\too H^\per(Y)\,.
\end{equation}
Its trace is computed by the formula
\begin{equation}\label{eq:defFMChowper3}
\mathrm{Tr}\, H^\per(\Phi_\cE)=
\mathrm{Tr}\,  H^{\mathrm{even}}(\Phi_\cE) - \mathrm{Tr} \, H^{\mathrm{odd}}(\Phi_\cE)
\in K=\mathrm{End}_{\mathrm{Vect}_{\bbZ/2}}(K)\, .
\end{equation}
\section{Noncommutative Lefschetz Theorem}\label{section:nclefschetz}
\begin{lemma}\label{lemma:abstracttraces}
Let $\cC$ be symmetric monoidal additive category with unit object $\unit$
such that $\mathrm{End}_\cC(\unit)\subset\mathbf{Q}$.
Consider a strongly symmetric monoidal additive functor $F:\cC\to\mathsf{D}$.
Then, for any rigid object $X$ of $\cC$, the object $L(X)$ is also rigid, and
for any map $f:X\to X$ of $\cC$, the trace $\mathrm{Tr}(L(f))$ is equal to the image
of the trace $\mathrm{Tr}(f)$ under the morphism of rings $\mathrm{End}_\cC(\unit)\to \mathrm{End}_\mathsf{D}(\unit)$ induced by $L$. In particular, $\mathrm{Tr}(L(f))$ is the image of a rational number
(more precisely, of an element of $\mathrm{End}_\cC(\unit)$) which does
not depend on the functor $L$.
\end{lemma}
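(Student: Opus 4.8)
The plan is to reduce the lemma to the standard fact that strong symmetric monoidal functors preserve duality data and commute with traces, together with an elementary remark on subrings of $\bbQ$. To fix notation: since $X$ is rigid it has a dual $X^\vee$ equipped with evaluation and coevaluation maps $\mathrm{ev}\colon X^\vee\otimes X\to\unit$ and $\mathrm{coev}\colon\unit\to X\otimes X^\vee$ satisfying the two triangle identities, and for $f\colon X\to X$ the trace $\mathrm{Tr}(f)\in\End_\cC(\unit)$ is the composite
$$\unit\xrightarrow{\ \mathrm{coev}\ }X\otimes X^\vee\xrightarrow{\ f\otimes\id\ }X\otimes X^\vee\xrightarrow{\ \tau\ }X^\vee\otimes X\xrightarrow{\ \mathrm{ev}\ }\unit\,,$$
where $\tau$ denotes the symmetry constraint. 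Write $u\colon\unit_{\mathsf{D}}\isoto L(\unit)$ and $\mu_{A,B}\colon L(A)\otimes L(B)\isoto L(A\otimes B)$ for the unit and multiplication constraints of the strong monoidal functor $L$, and let $L_\unit\colon\End_\cC(\unit)\to\End_{\mathsf{D}}(\unit_{\mathsf{D}})$ be the map sending $g$ to $u^{-1}\circ L(g)\circ u$; it is a unital ring homomorphism, being additive because $L$ is additive and multiplicative and unit-preserving by functoriality of $L$.

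First I would verify that $L(X)$ is rigid, with dual $L(X^\vee)$ and structure maps $u^{-1}\circ L(\mathrm{ev})\circ\mu_{X^\vee,X}$ and $\mu_{X,X^\vee}^{-1}\circ L(\mathrm{coev})\circ u$. The two triangle identities for this datum follow from those for $X$ by a diagram chase using naturality of $\mu$ and $u$ and the coherence axioms for the monoidal functor $L$ (compatibility of $\mu$ with the associativity and unit constraints). Substituting this duality datum into the defining composite for $\mathrm{Tr}(L(f))$, and using naturality of $\mu$ and $u$ once more together with the hypothesis that $L$ is \emph{symmetric} monoidal — so that $\mu$ intertwines the symmetries of $\cC$ and of $\mathsf{D}$ — the composite collapses to $L_\unit(\mathrm{Tr}(f))$. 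This establishes that $L(X)$ is rigid and that $\mathrm{Tr}(L(f))$ is the image of $\mathrm{Tr}(f)$ under $L_\unit$, as asserted.

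For the final assertion, the hypothesis $\End_\cC(\unit)\subset\bbQ$ says that $\End_\cC(\unit)$ is a subring of $\bbQ$, hence a localization $\bbZ[S^{-1}]$ of $\bbZ$ at some set of primes $S$; in particular $\mathrm{Tr}(f)$ is a rational number, computed entirely inside $\cC$ and therefore independent of $L$. A localization of $\bbZ$ admits at most one unital ring homomorphism to any fixed ring: the image of $1$, hence of $\bbZ$, is forced, and each $p^{-1}$ with $p\in S$ can only be sent to the inverse of the image of $p$. Consequently the ring homomorphism $L_\unit$ does not depend on $L$, and neither does its value $\mathrm{Tr}(L(f))=L_\unit(\mathrm{Tr}(f))$.

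The step I expect to be the main obstacle is the coherence diagram chase of the second paragraph — checking the triangle identities for the transported duality datum and the compatibility of $\mu$ with the symmetries. As this is precisely the well-known statement that strong symmetric monoidal functors carry dualizable objects to dualizable objects and commute with traces, I would, rather than expand every coherence diagram, either cite it or reduce it to MacLane's coherence theorem \cite{ML} and record only the handful of commuting squares that are actually needed.
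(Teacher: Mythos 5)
Your proof is correct, and it fills in precisely the standard argument that the paper declines to spell out: the paper contains no actual proof of this lemma, only the remark ``The proof of the preceding lemma is straightforward.'' So there is nothing to compare against on the paper's side; your write-up supplies the expected details. The first two paragraphs — transporting the duality datum $(X^\vee,\mathrm{ev},\mathrm{coev})$ along $L$ via the strong monoidal constraints $u$ and $\mu$, and then unwinding the trace composite using naturality and the symmetry compatibility — are exactly the textbook fact that strong symmetric monoidal functors preserve dualizable objects and their traces; citing MacLane's coherence theorem rather than drawing all the diagrams is appropriate and is surely what the authors had in mind. Your identification of $\mathrm{Tr}(L(f))$ with $L_\unit(\mathrm{Tr}(f))$ is the whole content of the lemma and is correctly derived.

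One small remark on the final paragraph: the sentence ``the ring homomorphism $L_\unit$ does not depend on $L$'' is slightly loose, since different invariants $L$ have different target rings $\End_{\mathsf{D}}(\unit)$, so $L_\unit$ literally does vary with $L$. What you mean — and what is true — is that once the target ring is fixed, a unital ring homomorphism out of a subring of $\bbQ$ (i.e.\ a localization $\bbZ[S^{-1}]$ of $\bbZ$) is unique if it exists, because the image of $\bbZ$ is forced and each $p^{-1}$ with $p\in S$ can only be sent to the inverse of the image of $p$. In fact the lemma's ``in particular'' clause asks for something even weaker: only that the \emph{preimage} $\mathrm{Tr}(f)\in\End_\cC(\unit)\subset\bbQ$ is computed entirely inside $\cC$ and so is independent of $L$, which is immediate. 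Your stronger observation is correct and worth having, but you may want to phrase it as ``$L_\unit$ is determined by its target ring,'' to avoid the impression that $\mathrm{Tr}(L(f))$ lands in a fixed ring as $L$ varies.
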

The proof of the preceding lemma is straightforward.

\begin{proposition}\label{prop:absolutetracencmot}
Let $\cA$ be a smooth and proper dg category. Consider
a perfect dg $\cA$-bimodule $M$ (\ie $M \in \cD_c(\cA^\op\otimes^\bbL \cA)$), and
let us denote by $\Phi_{[M]}: \cA \to \cA$ the associated endomorphism
of $\cA$ in $\NChow(k)$. Then we have the formula
$$\mathrm{Tr}(\Phi_{[M]}) = [\HH(\cA;M)] \in K_0(\cD_c(k))\,.$$
In particular if $k$ is a local ring we have the identification $K_0(k)=K_0(\cD_c(k))\simeq\mathbf{Z}$ and consequently the formula
$$ \mathrm{Tr}(\Phi_{[M]}) = \sum_i (-1)^i \mathrm{rk}\, \HH_i(\cA;M)\,.$$
\end{proposition}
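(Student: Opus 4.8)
The plan is to compute the trace of $\Phi_{[M]}$ directly in the rigid symmetric monoidal category $\NChow(k)$, using the characterization of smooth and proper dg categories as the rigid objects of $\Hmo_0(k)$ (hence of $\NChow(k)$), whose dual is the opposite dg category; this was recalled in Section~\ref{sec:dg}. First I would recall the definition of the categorical trace: for a rigid object $\cA$ with evaluation $\mathrm{ev}:\cA\otimes\cA^\op\to\unit$ and coevaluation $\mathrm{coev}:\unit\to\cA^\op\otimes\cA$, and an endomorphism $g:\cA\to\cA$, the trace $\mathrm{Tr}(g)\in\End_{\NChow(k)}(\unit)=K_0(k)$ is the composite $\unit\xrightarrow{\mathrm{coev}}\cA^\op\otimes\cA\xrightarrow{\mathrm{id}\otimes g}\cA^\op\otimes\cA\xrightarrow{\sigma}\cA\otimes\cA^\op\xrightarrow{\mathrm{ev}}\unit$. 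Under the identification $\Hom_{\NChow(k)}(\cA,\cB)=K_0(\cA^\op\otimes^\bbL\cB)$, the endomorphism $\Phi_{[M]}$ corresponds to the class $[M]\in K_0(\cA^\op\otimes^\bbL\cA)$, the unit corresponds to the diagonal bimodule, and I must track what each of these structure maps does on $K_0$-classes.

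The key computation is then an unwinding of these composites at the level of bimodules. The coevaluation $\unit\to\cA^\op\otimes^\bbL\cA$ is (the class of) the diagonal bimodule $\cA$ itself, viewed as an object of $\cD_c((\cA^\op)^\op\otimes^\bbL\cA^\op\otimes^\bbL\cA)=\cD_c(\cA\otimes^\bbL\cA^\op\otimes^\bbL\cA)$; precomposing with the coevaluation and applying $\mathrm{id}\otimes\Phi_{[M]}$ amounts to forming a derived tensor product of bimodules, and the final evaluation $\cA\otimes^\bbL\cA^\op\to\unit$ is the derived tensor product over $\cA\otimes^\bbL\cA^\op$ against the diagonal, i.e. Hochschild homology. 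Concretely, composing everything and invoking the description of composition in $\Hmo_0(k)$ as derived tensor product of bimodules (equation~\eqref{eq:bij} and the discussion after it), the trace is represented by $\cA\otimes^\bbL_{\cA^\op\otimes^\bbL\cA}M$, which by definition of Hochschild homology of a dg category with coefficients in a bimodule is exactly $\HH(\cA;M)$. Taking $K_0$-classes gives $\mathrm{Tr}(\Phi_{[M]})=[\HH(\cA;M)]\in K_0(\cD_c(k))=K_0(k)$. One should double-check that $\HH(\cA;M)$ lands in $\cD_c(k)$, which follows because $\cA$ is smooth and proper and $M$ is perfect, so the functor $\HH$ of Example~\ref{example:HH}, being symmetric monoidal, sends rigid objects (here $M$ viewed appropriately, or rather the relevant composite) to rigid objects, i.e. perfect complexes.

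For the final sentence: when $k$ is local (or more generally $K_0(k)=\bbZ$) we have $K_0(\cD_c(k))=K_0(k)\simeq\bbZ$ via the rank, so $[\HH(\cA;M)]=\sum_i(-1)^i\,\mathrm{rk}\,\HH_i(\cA;M)$, giving the stated formula. I expect the main obstacle to be bookkeeping: matching the abstract categorical trace diagram with the correct order of derived tensor products of bimodules and the correct identification of evaluation/coevaluation with the diagonal bimodule, keeping track of $\cA$ versus $\cA^\op$ throughout. Once the evaluation is identified with "tensor with the diagonal over the enveloping category" — which is precisely the definition of Hochschild homology with coefficients — the identification $\mathrm{Tr}(\Phi_{[M]})=[\HH(\cA;M)]$ is essentially forced, and this is really just the classical fact that Hochschild homology computes the trace of the identity (Dennis trace / Morita invariance of $\HH$), here extended to arbitrary perfect bimodules $M$.
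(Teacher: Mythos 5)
Your argument is correct and is essentially the proof the paper is relying on: the paper itself only cites \cite[Prop.~4.3]{Semi} at this point, and that reference proves the statement exactly as you do, by computing the categorical trace of $\Phi_{[M]}$ via the rigid structure of $\NChow(k)$ (dual $=\cA^\op$, evaluation and coevaluation given by the diagonal bimodule) so that the composite becomes $M\otimes^\bbL_{\cA\otimes^\bbL\cA^\op}\cA=\HH(\cA;M)$, an object of $\cD_c(k)$ since all the bimodules involved lie in the relevant $\rep$-categories. The only cosmetic weakness is your justification that $\HH(\cA;M)$ is perfect; the cleanest reason is that the trace is by construction an endomorphism of the unit in $\Hmo(k)$, hence an object of $\rep(k,k)\simeq\cD_c(k)$, rather than an appeal to monoidality of the functor $\HH$.
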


\begin{proof}
See \cite[Prop.~4.3]{Semi}.
\end{proof}
\begin{corollary}\label{cor:absolutetracencmotcomm}
Given a smooth and proper $k$-scheme $X$ and a bounded complex $\cE$ of
coherent $\cO_{X\times X}$-modules, let us denote by
$\Phi_\cE:\perf(X)\to\perf(X)$ the Fourier-Mukai morphism in $\Hmo_0(k)$
induced by $\cE$. Then, the following equality holds
$$\mathrm{Tr}(\Phi_{\cE}) = [\HH(\cE)] \in K_0(\cD_c(k))\,.$$
\end{corollary}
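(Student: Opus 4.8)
The plan is to deduce this statement directly from Proposition \ref{prop:absolutetracencmot} by reducing the commutative, scheme-theoretic situation to the purely dg-categorical one. First I would recall that since $X$ is smooth and proper over $k$, the dg category $\cA := \perf(X)$ is a smooth and proper dg category (as noted at the end of Section \ref{sec:dg}, this follows from To\"en's results \cite[Lemma 8.11 and Theorem 8.15]{Toen}), so Proposition \ref{prop:absolutetracencmot} applies to it. The key point is then to identify the Fourier-Mukai morphism $\Phi_\cE \in \Hom_{\Hmo_0(k)}(\perf(X),\perf(X))$ attached to the bounded complex $\cE$ of coherent $\cO_{X\times X}$-modules with the morphism $\Phi_{[N]}$ attached to a suitable perfect dg $\cA$-bimodule $N$. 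Since $X$ is regular, every bounded complex of coherent sheaves on $X\times X$ is perfect (up to quasi-isomorphism; see the Notations), so $\cE$ determines a class in $\mathrm{Iso}\,\cD_\perf(X\times X)$, and under the explicit bijection \eqref{eq:isom000}--\eqref{eq:isom1} this class corresponds to an element of $\Hom_{\Hmo(k)}(\perf(X),\perf(Y))$ with $Y = X$, hence — after passing to $K_0$ via \eqref{eq:universal} — to the morphism $\Phi_\cE$ in $\Hmo_0(k)$. Concretely, using the identifications $\perf(X)^{\op}\otimes^{\bbL}\perf(X)\simeq\perf(X\times X)$ from \eqref{eq:isom00}, the complex $\cE$ is nothing but a perfect dg $\cA$-bimodule $N$ with $[N] = \Phi_\cE$ in the appropriate Grothendieck group.

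Second, I would invoke Proposition \ref{prop:absolutetracencmot} with this bimodule $N$: it gives $\mathrm{Tr}(\Phi_{[N]}) = [\HH(\cA;N)] \in K_0(\cD_c(k))$. It then remains to check the compatibility of Hochschild homology with the two descriptions, namely that $\HH(\perf(X);\cE)$, computed as the Hochschild homology of the dg category $\perf(X)$ with coefficients in the bimodule corresponding to $\cE$, agrees with the Hochschild homology $\HH(X,\cE)$ of the scheme $X$ with coefficients in $\cE$ appearing in the statement (and in Theorem \ref{thm:Lunts1}). This is exactly the content of Example \ref{example:HH}: the restriction of the functor $\HH$ to the category $\mathbf{P}(k)$ recovers the usual Hochschild homology of smooth and proper $k$-schemes with coefficients (the agreement with Weibel's and with Caldararu--Willerton's versions being recorded there, via Keller \cite{Ringed} and \cite[\S4.2]{Cald}). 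Putting these identifications together yields $\mathrm{Tr}(\Phi_\cE) = [\HH(\cE)]$ in $K_0(\cD_c(k))$, as claimed.

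The only genuinely delicate point is the bookkeeping in the first step: one must be careful that the choice of the duality isomorphism \eqref{eq:isom0} used to present $\perf(X)^{\op}\otimes^{\bbL}\perf(X)$ as $\perf(X\times X)$ is the one induced by the duality functor $\cE\mapsto\bbR\mathit{Hom}(\cE,\cO_X)$, so that the bimodule attached to $\cE$ really is carried to the Fourier-Mukai functor $\Phi_\cE$ of \eqref{eq:isom2} with the normalization used by To\"en and Lunts — otherwise the resulting endomorphism of $\perf(X)$ would differ from $\Phi_\cE$ by the chosen identification, and although (as remarked after \eqref{eq:isom2}) such a change cannot affect traces, one still wants the identification to be made explicitly. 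Once this is pinned down, everything else is a formal consequence of Proposition \ref{prop:absolutetracencmot}, the identifications of Sections \ref{sec:dg} and \ref{sec:category-kernels}, and the computation of $\HH$ on $\mathbf{P}(k)$ from Example \ref{example:HH}; no further argument is needed.
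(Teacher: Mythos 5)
Your proposal is correct and follows essentially the same route as the paper: both reduce to Proposition \ref{prop:absolutetracencmot} by identifying $\cE$ with a perfect dg $\perf(X)$-bimodule via \eqref{eq:isom00}, and then use the identification $\HH(\perf(X);M)\simeq\HH(\cE)$ recorded in Example \ref{example:HH} (Keller's comparison of Hochschild homology of schemes with coefficients). Your additional remark about pinning down the duality convention \eqref{eq:isom0} is a sensible precision that the paper handles implicitly by fixing the $\bbR\mathit{Hom}(-,\cO_X)$ normalization in Section \ref{sec:category-kernels}.
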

\begin{proof}
This follows automatically from the combination of Proposition~\ref{prop:absolutetracencmot} with
the identification $\HH(\perf(X);M)\simeq\HH(\cE)$
where $M$ denotes the $\perf(X)$-bimodule defined by $\cE$
via the isomorphism \eqref{eq:isom00}; recall from Example \ref{example:HH} that thanks to the work of Keller all versions of Hochschild homology of schemes agree (including
with coefficients).
\end{proof}
%

\subsection*{Proof of Theorem~\ref{thm:main}}\label{proof:thm:main}
Let $L:\dgcat\to\mathsf{D}$ be a symmetric monoidal invariant and $\underline L:\Hmo_0\to\mathsf{D}$
the corresponding symmetric monoidal additive functor; see Proposition \ref{prop:universalmonoidal}.
Theorem~\ref{thm:main} follows immediate from Lemma \ref{lemma:abstracttraces} applied to $\underline L$
and from Proposition \ref{prop:absolutetracencmot}.\qed

\subsection*{Proof of Theorem~\ref{thm:main2}}\label{proof:thm:main2}
Assume that $k$ is a field and consider a Weil cohomology
$$H^\ast:\Chow_\mathbf{Q}(k)\too\mathrm{Vect}_\mathbf{Z}(K)\, .$$
Recall from \eqref{eq:periodizeWeilcoh} the construction of the perioditization functor
\begin{equation}\label{eq:H-per}
H^\per:\Chow_\bbQ^\per(k)\too\mathrm{Vect}_{\bbZ/2}(K)\, .
\end{equation}
Let $\cE$ be a bounded complex of coherent $\cO_{X\times X}$-modules and $\Phi_\cE$ be the corresponding Fourier-Mukai
endomorphism in the category $\Chow^\per_\bbQ(k)$; see \eqref{eq:defFMChowper1}.
Thanks to Theorem \ref{thm:categoricalGRR} and
Corollary \ref{cor:absolutetracencmotcomm} the
trace of $\Phi_\cE$ in $\Chow^\per_\bbQ(k)$ is equal to the Euler
character of the Hochschild homology complex $\HH(\cE)$. The proof follows then from the application of Lemma \ref{lemma:abstracttraces} to \eqref{eq:H-per} and from the Formula \eqref{eq:defFMChowper3}.\qed

\subsection*{Proof of Lunts' Theorem \ref{thm:Lunts1}}
If $k$ is a field, by applying Theorem~\ref{thm:main} for $L=\HH$, the Hochschild
homology functor (Example~\ref{example:HH}), we see that
equality \eqref{eq:equality3} is a particular case of
Theorem \ref{thm:main}. Similarly, by applying Theorem \ref{thm:main2} in the
case of Betti cohomology with rational (or complex) coefficients,
we get Equality \eqref{eq:equality2}. 
Equality \eqref{eq:equality1}
follows immediately from Equality \eqref{eq:equality3}
(and from Corollary \ref{cor:absolutetracencmotcomm}).\qed

\begin{remark}
Lunts derives Formula \eqref{eq:equality2}
from Formula \eqref{eq:equality1} using the work of Macri-Stellari \cite[Theorem~1.2]{MS} on the  Hochschild-Kostant-Rosenberg
isomorphism; see \cite[Theorem~1.4]{Lunts}. In other words, Lunts' proof of \eqref{eq:equality2} first considers Betti cohomology as a strong symmetric monoidal
functor from the category of Hochschild correspondences between smooth
and projective $k$-schemes (instead of the category of rational
$K$-theory correspondences) and then applies some form of Lemma \ref{lemma:abstracttraces}.
This argument requires to check that
the HKR isomorphism is functorial with respect to Fourier-Mukai functors, which is precisely the content of \cite[Theorem~1.2]{MS}. Nervertheless, note that Lunts' argument uses some
weak form of Theorem \ref{thm:categoricalGRR} though, at least to define
the functoriality with respect to Fourier-Mukai functors for (periodic) Betti
cohomology.
\end{remark}
\section{Noncommutative Hirzebruch-Riemman-Roch}\label{sec:HRR}

\subsection*{Proof of Theorem~\ref{thm:HRR1}}\label{proof:thm:HRR1}
Given a proper dg category $\cA$, we have a well-defined pairing in $\Hmo(k)$:
\begin{equation}\label{eq:absolutepairing0}
\cA\otimes^\bbL \cA^\op \to \perf(k)\simeq k \ , \quad
(x,y) \mapsto \cA(y,x)\,.
\end{equation}
By applying the symmetric monoidal
additive functor $\Hom_{\Hmo(k)}(k,-)$, and using the identifications $K_0(\cA)=\Hom_{\Hmo(k)}(k,\cA)$, we then get a pairing of abelian groups:
\begin{equation}\label{eq:absolutepairing1}
\langle - , - \rangle:K_0(\cA)\otimes K_0(\cA)\to K_0(k)\, .
\end{equation}
Now, let $L:\dgcat(k) \to \mathsf{D}$ be
a strong symmetric monoidal additive invariant.
Thanks to Proposition \ref{prop:universalmonoidal} the functor $L$
canonically defines a strong symmetric monoidal functor $\underline{L}:\Hmo_0(k)\to\mathsf{D}$. 
For a dg category $\cC$, we will write
\begin{equation*}
H(\cC)=\Hom_\mathsf{D}(\unit,L(\cC))=\Hom_\mathsf{D}(\unit,\underline{L}(\cC))\, .
\end{equation*}
Since the functor $\underline{L}$ is strongly unital it induces
functorial maps
\begin{equation}\label{eq:abtractChern}
\chern_L:K_0(\cC)\too H(\cC)
\end{equation}
which we call the \emph{Chern characters}.

By applying the strong tensor functor $\underline{L}$ to \eqref{eq:absolutepairing0},
we obtain a pairing
\begin{equation}\label{eq:pair}
\underline{L}(\underline{\cA}): L(\cA) \otimes L(\cA^\op) \too {\bf 1}
\end{equation}
and consequently, by further applying the symmetric monoidal additive functor $\Hom_\mathsf{D}(\unit,-)$, a canonical pairing of abelian groups
\begin{equation}\label{eq:pair2}
\langle - , - \rangle: H(\cA) \otimes H(\cA^\op) \too H(k)=\mathrm{End}_\mathsf{D}(\unit)\,.
\end{equation}
The fact that the functor $\underline{L}$ is strongly symmetric monoidal implies that the pairings \eqref{eq:absolutepairing1}
and \eqref{eq:pair2} are compatible with the Chern character \eqref{eq:abtractChern}.
In other words, we have the following commutative diagram:
\begin{equation}\label{eq:formalncHRR}
\begin{split}
\xymatrix{
K_0(\cA)\otimes K_0(\cA^\op)\ar[r]^(.65){\langle - , - \rangle}
\ar[d]_{\chern_L\otimes\chern_L}& K_0(k)\ar[d]^{\chern_L}\\
H(\cA) \otimes H(\cA^\op)\ar[r]^(.65){\langle - , - \rangle}& H(k)
}\end{split}
\end{equation}
Given a perfect $\cA$-module $M$, which we can see as a map $M:k\simeq\perf(k)\to\cA$ in $\Hmo(k)$,
we will still denote by $M$
its class in $K_0(\cA)$. This can (and should) be interpreted
as a map from $k\simeq\perf(k)$ to $\cA$ in $\Hmo_0(k)$.
By composing the duality functor $\perf(k)^\op\to\perf(k)$
with $M:\perf(k)\to\cA$ we obtain 
a map $\perf(k)^\op\to\cA$ and consequently a map
$$DM:k\simeq\perf(k)\too \cA^\op$$
(in other words, $DM$ is a perfect $\cA^\op$-module).
Note that, as we work up to Morita equivalence, we may always
suppose that $M$ is quasi-isomorphic to an $\cA$-module of the
form $\cA(-,x)$, with $x$ an object of $\cA$, in which case $DM$
simply corresponds to the $\cA^\op$-module $\cA(x,-)=\cA^\op(-,x)$. In particular,
the pairing \eqref{eq:absolutepairing0} applied to the pair $(N,DM)$, with $M$ and $N$ any two perfect $\cA$-modules, gives the perfect $k$-module
$\bbR\mathit{Hom}_\cA(M,N)$.
We then have two maps $N:k\to \cA$ and $DM:k\to\cA^\op$ in $\Hmo_0(k)$,
and the commutativity of the square \eqref{eq:formalncHRR} gives the formula
$$\chern_L(\langle N , DM \rangle)=\langle \chern_L(N) , \chern_L(DM) \rangle\, .$$
But, by definition, $\langle N , DM \rangle$ is the class of the complex
obtained by applying the pairing \eqref{eq:absolutepairing0} to $N$ and $DM$.
We then obtain Equality \eqref{eq:HRR1}.\qed
\subsection*{Proof of Theorem~\ref{thm:HRR}}\label{proof:thm:HRR}
In the case where $L=\HH$ is Hochschild homology, the
Chern characters \eqref{eq:abtractChern} agree with Dennis trace maps; see \cite[Theorem~2.8]{Prods}.
Therefore our constructions are compatible with those of Shklyarov \cite{Shklyarov}: Theorem \ref{thm:HRR}
readily follows from Theorem \ref{thm:HRR1}.\qed

\begin{example}
When $L$ is the mixed complex functor $C$ of Example~\ref{example:mix}
and $A$ is a proper dg algebra $A$, equality \eqref{eq:HRR1} reduces to 
$$ \chern^-(\bbR\mathit{Hom}_\cA(M,N)) = \langle \chern^-(N), \chern^-(DM)\rangle\,,$$
where $\chern^-$ is the classical Chern character with values in negative cyclic homology; see \cite[Theorem~2.8]{Prods}.
\end{example}
\begin{remark}
Given a proper dg category $\cA$ and two perfect dg $\cA$-modules $M$ and $N$, the following equality holds (essentially by definition)
\begin{equation}\label{eq:last1}
\HH(\cA;DM\otimes^\bbL_k N)=\bbR\mathit{Hom}_\cA(M,N)\,.
\end{equation}
Hence, whenever $\cA$ is smooth
and proper, Shklyarov's Theorem~\ref{thm:HRR} is a direct consequence of
Lunts' Formula \eqref{eq:equality3}.
%
\end{remark}

\end{document}